\newtheorem{theorem}{Theorem}[section]
\newtheorem{proposition}[theorem]{Proposition}
\newtheorem{corollary}[theorem]{Corollary}
\newtheorem{lemma}[theorem]{Lemma}
\def\1{\mathds{1}}
\newcommand{\footremember}[2]{%
   \footnote{#2}
    \newcounter{#1}
    \setcounter{#1}{\value{footnote}}%
}
\title{Convexity and concavity of a class of functions related to the elliptic functions}
\author{%
    Mohamed Bouali\footremember{alley}{Department of mathematics, Preparatory Institute for Engineering Studies of Tunis\newline
     Department of mathematics, Faculty of Sciences of Tunis}%
}
\date{}
\begin{document}

\maketitle
\begin{abstract} We investigate the convexity property on $(0,1)$ of the function $$f_a(x)=\frac{{\cal K}{(\sqrt x)}}{a-(1/2)\log(1-x)}.$$
We show that $f_a$ is strictly convex on $(0,1)$ if and only if $a\geq a_c$ and $1/f_a$ is strictly convex on $(0,1)$ if and only if $a\leq\log 4$, where $a_c$ is some critical value. The second main result of the paper is to study the log-convexity and log-concavity of the function $$h_p(x)=(1-x)^p{\cal K}(\sqrt x).$$
We prove that $h_p$ is strictly log-concave on $(0,1)$ if and only if $p\geq 7/32$ and strictly log-convex if and only if $p\leq 0$. This solves some problems posed by Yang and Tian and complete their result and a result of Alzer and  Richards that $f_a$ is strictly concave on $(0,1)$ if and only if $a=4/3$ and $1/f_a$ is strictly concave on $(0,1)$ if and only if $a\geq 8/5$. As applications of the convexity and concavity, we establish among other inequalities, that for $a\geq a_c$ and all $r\in(0,1)$
$$\frac{2\pi\sqrt\pi}{(2a+\log 2)\Gamma(3/4)^2}\leq \frac{{\cal K}(\sqrt r)}{a-\frac12\log (r)}+\frac{{\cal K}(\sqrt{1-r})}{a-\frac12\log (1-r)}<1+\frac\pi{2a},$$
and for $p\geq 3(2+\sqrt 2)/8$ and all $r\in(0,1)$
$$\sqrt{(r-r^2)^p{\cal K}(\sqrt{1-r}){\cal K}(\sqrt r)}< \frac{\pi\sqrt\pi}{2^{p+1}\Gamma(3/4)^2}<\frac{r^p{\cal K}(\sqrt{1-r})+(1-r)^p{\cal K}(\sqrt r)}{2}.$$
\end{abstract}

Key words:Complete elliptic
integrals; convexity; inequalities.\\


Subject classifications:26D07; 33E05.

\maketitle

\section{Introduction and statement of the results}\label{sec1}

The complete elliptic integral of the first kind is defined on $[0,1)$ by
$${\cal K}(r)=\int_0^{\pi/2}\frac{dt}{\sqrt{1-r^2\sin^2(t)}}=\frac\pi2{}_2F_1(1/2,1/2,1,r^2),$$
and the complete  elliptic integral of the second kind
$${\cal E}(r)=\int_0^{\pi/2}\sqrt{1-r^2\sin^2(t)}dt=\frac\pi2{}_2F_1(1/2,-1/2,1,r^2),$$
where ${}_2F_1$ is the Gaussian hypergeometric function
$${}_2F_1(a,b,c,x)=\sum_{n=0}^\infty\frac{(a)_n(b)_n}{(c)_n}\frac{x^n}{n!},\quad (-1<x<1),$$
and $(a)_n=a(a+1)\cdots (a+n-1)=\Gamma(a+n)/\Gamma(a)$ the Pochhammer symbol.

Special functions and especially elliptic functions arise in numerous branches of mathematics such as geometric function theory
and quasi-conformal mappings, also in physics, theory of mean values, number theory and other related fields, see for instance
\cite{byr, ber, bal, bha, ande, qiu, wang, alz}. Many authors were interested in studying convexity and concavity properties of functions related to $\cal  K$ and $\cal E$. Anderson et al. in \cite{ander} showed that the function defined on $(0,1)$ by
$$U_a(x)=\frac{{\cal K}(x)}{a-(1/2)\log(1-x^2)},$$
is strictly decreasing if and only if $0\leq b\leq\log 4$, and strictly increasing if and only if $b\geq 2$. In a recent paper \cite{yan}, Yang and Tian  studied the closely related function
$$V_b(x)=\frac{{\cal K}(\sqrt x)}{b-(1/2)\log(1-x)},\quad c\geq 0.$$
They proved that $V_b$ is strictly concave on $(0,1)$ if and only if $b=4/3$. They also posed the following problem.

{\it Determine the best parameters $a$ and $b$ such that $V_a$ is convex on $(0, 1)$ and $1/V_b$ is concave on $(0, 1)$.}

In the recent paper \cite{alz1}, Alzer and Richards give an answer to the second problem. However, the first problem remains open until now.

Later in \cite{rich}, Richards and Smith extended the second problem to the generalized elliptic integral ${\cal K}_p, (p\geq 1)$.

The primary objective of this paper is to address the first part of the problem and to give an answer. Additionally, we study the convexity property of the function $1/V_b$.

In the same paper \cite{yan}, the authors conjectured that:

 {\it The function $h_p(x)=(1-x)^p {\cal K}(\sqrt x)$ is log-concave on $(0,1)$ if and only if $p\geq 7/32$.}

  It is our aim in the second part of the paper to solve this problem. After writing this paper, the author discovers that the log-convexity of the function $h_p$ has been proved by Wang et all in \cite{wan} by another method.

  In what follows, we adopt these notations $K(x)={\cal K}(\sqrt x)$, $E(x)={\cal E}(\sqrt x)$
\begin{theorem}\label{t2} For $a\in\Bbb R$, the function
$$f_a(x)=\frac{ K( x)}{a-\frac12\log(1-x)},$$
is strictly convex on $(0,1)$ if and only if $a\geq a_c$ and is concave on $(0,1)$ if and only if $a=4/3$.

Where $$a_c=\max_{x\in(0,1)} \Big[\frac12\log(1-x)+\frac{v(x)+\sqrt{\Delta(x)}}{2 u(x)}\Big],$$
$$u(x)=\frac1{16}{}_2F_1(3/2,3/2,3,x)(1-x)+\frac12{}_2F_1(1/2,1/2,2,x),$$
$$v(x)=\frac12{}_2F_1(1/2,1/2,2,x)+{}_2F_1(1/2,1/2,1,x),$$
 and $$\Delta(x)=v(x)^2-4u(x){}_2F_1(1/2,1/2,1,x).$$
Numerically $a_c\simeq1.4622$.
\end{theorem}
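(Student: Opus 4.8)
The plan is to analyze the sign of the second derivative of $f_a$ directly. Writing $L(x)=a-\frac12\log(1-x)$ so that $f_a=K/L$, I would compute
$$f_a''(x)=\frac{K''L-2K'L'+2K(L')^2/L-KL''}{L^2}.$$
Since $L'=\frac1{2(1-x)}>0$ and $L''=\frac1{2(1-x)^2}>0$, convexity of $f_a$ on $(0,1)$ is equivalent to the numerator $N(x):=K''L^2-2K'L'L+2K(L')^2-KL''L$ being nonnegative (and strictly positive for strict convexity). The key structural observation is that $N$ is affine in the parameter $a$ through $L=a-\frac12\log(1-x)$: collecting powers of $L$, one gets a quadratic polynomial in $L$, namely $N(x)=K''L^2-(2K'L'+KL'')L+2K(L')^2$. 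Thus for each fixed $x$, requiring $N(x)\ge 0$ is a condition on $L$, and since $L$ is a monotone function of $a$, this translates into a threshold condition on $a$.

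Next I would use the standard derivative formulas for the complete elliptic integral to express everything through hypergeometric functions. Using $K(x)=\frac\pi2{}_2F_1(1/2,1/2,1,x)$ together with the contiguous relations and the differentiation rule $\frac{d}{dx}{}_2F_1(\alpha,\beta,\gamma,x)=\frac{\alpha\beta}{\gamma}{}_2F_1(\alpha+1,\beta+1,\gamma+1,x)$, I would rewrite $K'$ and $K''$ in terms of ${}_2F_1(1/2,1/2,2,x)$ and ${}_2F_1(3/2,3/2,3,x)$. Substituting these into the quadratic $N(x)=K''L^2-(2K'L'+KL'')L+2K(L')^2$ and clearing the common factor $\pi/2$ should produce exactly the coefficient functions $u(x)$, $v(x)$ appearing in the statement, so that the inequality $N(x)\ge0$ becomes $u(x)M^2-v(x)M+{}_2F_1(1/2,1/2,1,x)\ge 0$ where $M=L(x)$ or a shifted version $M=a-\frac12\log(1-x)$; matching constants will pin down the precise form. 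Since $u(x)>0$ on $(0,1)$ (both hypergeometric terms are positive), this quadratic in $M$ opens upward, so it is nonnegative for all $x$ precisely when $M$ lies outside the interval between its two roots, i.e. when $M\ge\frac{v(x)+\sqrt{\Delta(x)}}{2u(x)}$ for every $x$. Translating $M=a-\frac12\log(1-x)$ back, this is exactly the requirement $a\ge\frac12\log(1-x)+\frac{v(x)+\sqrt{\Delta(x)}}{2u(x)}$ for all $x$, which is the stated formula for $a_c$ as a maximum over $x\in(0,1)$.

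For the concavity half of the statement, the claim that $f_a$ is concave on $(0,1)$ if and only if $a=4/3$ is essentially the Yang--Tian/Alzer--Richards result already quoted in the introduction for $V_b$ with $b=4/3$, so I would either cite it or re-derive it as the degenerate case $N(x)\le0$ for all $x$: the earlier analysis forces the quadratic in $M$ to be nonpositive throughout, and since $u>0$ this can only hold on all of $(0,1)$ for the single value $a=4/3$ where the discriminant condition degenerates, recovering the known concavity characterization.

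The main obstacle will be establishing the two analytic facts that make the threshold well-defined and finite. First, I must show that the supremum defining $a_c$ is attained and finite, which requires controlling the boundary behavior of $\frac12\log(1-x)+\frac{v(x)+\sqrt{\Delta(x)}}{2u(x)}$ as $x\to0^+$ and $x\to1^-$; near $x=1$ the logarithmic singularity of $K$ and the divergence of $\log(1-x)$ must cancel, and verifying that the combination stays bounded (giving the numerical value $a_c\simeq1.4622$ at an interior critical point) is the delicate estimate. Second, I must confirm $\Delta(x)\ge0$ wherever needed so the square root is real and the root formula is valid; this amounts to a nonnegativity inequality among the three hypergeometric functions $u,v$ and ${}_2F_1(1/2,1/2,1,x)$, which I expect to handle via power-series coefficient comparison or a monotonicity argument on the ratio of consecutive Taylor coefficients. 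The algebraic reduction to the quadratic in $M$ is mechanical; the genuine difficulty lies in these two global estimates on the elliptic/hypergeometric functions.
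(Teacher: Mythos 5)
Your reduction is, in skeleton, exactly the paper's own: multiplying $f_a''$ by the positive factor $(1-x)^2\bigl(a-\tfrac12\log(1-x)\bigr)^3$ yields the same quadratic $u(x)L^2-v(x)L+{}_2F_1(1/2,1/2,1,x)$ in $L=a-\tfrac12\log(1-x)$, which the paper factors as $u(x)\bigl(a-w_+(x)\bigr)\bigl(a-w_-(x)\bigr)$ with $w_\pm(x)=\tfrac12\log(1-x)+\bigl(v(x)\pm\sqrt{\Delta(x)}\bigr)/(2u(x))$. But there is a genuine gap at the decisive step. Nonnegativity of an upward-opening quadratic at the point $L(x)$, for every $x$, means that for \emph{each} $x$ either $a\geq w_+(x)$ or $a\leq w_-(x)$; you silently discard the second branch (and the a priori possibility of switching branches as $x$ varies). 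To prove the stated ``only if'' one must (i) argue via continuity of $w_\pm$, strict separation $w_-<w_+$ (which needs $\Delta>0$ on $(0,1)$), and connectedness of $(0,1)$ that one alternative holds uniformly, and then (ii) kill the lower branch. The paper does (ii) in Proposition \ref{r5}: using $v-\sqrt{\Delta}=4u\,{}_2F_1(1/2,1/2,1,x)/(v+\sqrt{\Delta})$ it shows $w_-(x)\leq\tfrac12\log(1-x)+2\to-\infty$ as $x\to1^-$, so no real $a$ can lie below $w_-$ everywhere. Without this, your characterization ``convex iff $a\geq a_c$'' is simply not established.

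The second problem is that the two ``global estimates'' you defer are not peripheral: they \emph{are} the proof. The paper's Proposition \ref{r5} shows (a) $\Delta(0)=0$ and $\Delta'>0$ on $(0,1)$, by writing $4\Delta'=\Delta_1\,{}_2F_1(3/2,3/2,2,x)+\Delta_2\,{}_2F_1(1/2,1/2,1,x)$ and checking both $\Delta_1,\Delta_2$ are power series with nonnegative coefficients (this is what makes $\sqrt{\Delta}$ real and the two roots genuine); (b) $u$ is increasing with $u\geq 9/16>0$; and (c) $w_+$ extends continuously to $[0,1]$ with $w_+(0)=4/3$ and $\lim_{x\to1^-}w_+(x)=\log 4$, the cancellation of logarithmic singularities coming from $K(x)=\log 4-\tfrac12\log(1-x)+O\bigl((1-x)\log(1-x)\bigr)$ together with $v/u\sim(\pi/2)\,{}_2F_1(1/2,1/2,1,x)=K(x)\cdot\frac{2}{\pi}\cdot\frac{\pi}{2}$ as $x\to1^-$; this is precisely what makes $a_c=\max w_+$ finite and attained. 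A proposal that ends with ``I expect to handle this via coefficient comparison'' has located the difficulty but not resolved it. Finally, your concavity argument is off in its mechanism: $a=4/3$ is forced not by ``the discriminant degenerating at a single $a$'' but by the squeeze $w_-(x)\leq a\leq w_+(x)$ as $x\to0^+$, where both roots tend to $4/3$ because $\Delta(0)=0$; the sufficiency of $a=4/3$ (i.e.\ $w_-\leq 4/3\leq w_+$ on all of $(0,1)$) is the part that both you and the paper ultimately outsource to the cited Yang--Tian result, which is legitimate.
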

The constant $a_c$ is sharp. In the sense, it can not be replaced by a constant less than $a_c$.
\begin{theorem}\label{t3} For $a\in\Bbb R$, the function
$$\frac1{f_a(x)}=\frac{a-\frac12\log(1-x)}{ K( x)},$$
is strictly convex on $(0,1)$ if and only if $ a\leq \log 4$ and strictly concave on $(0,1)$ if and only if $a\geq 8/5$.
\end{theorem}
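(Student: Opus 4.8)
The plan is to analyze the sign of the second derivative of $1/f_a(x) = g(x)/K(x)$ where $g(x) = a - \frac12\log(1-x)$. Writing $\phi = 1/f_a$, the key is that convexity of $\phi$ is governed by the sign of $\phi''$. Since multiplying by the positive factor $K(x)^3$ does not change the sign, I would compute $K^3\,\phi''$ and express it in the form $g''K^2 - 2g'K'K + g(2K'^2 - K''K)$, using $g'(x)=\frac{1}{2(1-x)}$ and $g''(x)=\frac{1}{2(1-x)^2}$. The natural step is to isolate the parameter $a$ linearly: since $g = a + \frac12\log\frac{1}{1-x}$ appears affinely, the numerator of $\phi''$ has the shape $a\cdot A(x) + B(x)$, where $A(x) = 2K'^2 - K''K$ and $B(x)$ collects the remaining terms. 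Then $\phi$ is convex iff $a\,A(x)+B(x)\ge 0$ for all $x\in(0,1)$; provided $A(x)$ has a definite sign, this reduces to comparing $a$ against $\sup$ or $\inf$ of $-B(x)/A(x)$.

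First I would record the standard derivative formulas for $K(x)=\mathcal{K}(\sqrt x)$, namely the relations expressing $K'$ and $K''$ through $\mathcal{K}$ and $\mathcal{E}$ (equivalently through the hypergeometric representations in Theorem~\ref{t2}); these let me rewrite $A(x)$ and $B(x)$ in closed form. The crucial structural fact to establish is that $A(x) = 2K'(x)^2 - K''(x)K(x)$ has a fixed sign on $(0,1)$, so that the inequality $a\,A+B\ge 0$ can be divided through cleanly. I expect $A(x)<0$, which would flip the inequality and yield a condition of the form $a \le \Psi(x)$ for all $x$, i.e. $a \le \inf_x \Psi(x)$ with $\Psi = -B/A$; the claim $a\le\log 4$ then amounts to showing $\inf_{x\in(0,1)}\Psi(x) = \log 4$, with the infimum attained in a limit (presumably as $x\to 0^+$, matching the boundary behaviour $\mathcal K(0)=\pi/2$ and the known threshold $\log 4$ from the monotonicity result of Anderson et al.\ quoted in the introduction).

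For the concavity half I would run the mirror argument: $\phi$ is concave iff $a\,A(x)+B(x)\le 0$ for all $x$, i.e.\ (using $A<0$) iff $a \ge \sup_x \Psi(x)$, and I would show this supremum equals $8/5$, again locating the extremal value through boundary asymptotics as $x\to 0^+$ and $x\to 1^-$ together with a monotonicity argument for $\Psi$ on the interior. To make "strictly convex/concave'' precise rather than just weak inequalities, I would verify that at the threshold parameters the inequality $a\,A+B$ does not vanish identically and that strictness propagates, so the second derivative is strictly signed on the open interval.

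The main obstacle will be controlling the function $\Psi(x) = -B(x)/A(x)$ over the whole interval: proving that its infimum is exactly $\log 4$ and its supremum exactly $8/5$ requires showing $\Psi$ is monotone (or at least that no interior critical point beats the boundary values), and this is where the delicate asymptotic expansions of $\mathcal K$ and $\mathcal E$ near $x=1^-$ — where $\mathcal K(\sqrt x)\sim \frac12\log\frac{16}{1-x}$ — become essential. I would handle the endpoint behaviour with these logarithmic expansions and reduce the interior monotonicity of $\Psi$ to the sign of a single auxiliary function, ideally exploiting the hypergeometric series so that the auxiliary function has nonnegative Maclaurin coefficients and its sign follows term by term.
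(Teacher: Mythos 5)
Your reduction coincides exactly with the paper's: the numerator of $(1/f_a)''$ is affine in $a$, the coefficient $A(x)=2K'(x)^2-K''(x)K(x)$ equals $-\bigl(2E(x)K(x)-2E(x)^2-x(1-x)K(x)^2\bigr)/\bigl(4x^2(1-x)^2\bigr)$, and your threshold function $\Psi=-B/A$ is precisely the function $\varphi$ of Lemma~\ref{r}, item 6, so the theorem is equivalent to the two facts you isolate: $A<0$ on $(0,1)$ and $\inf\Psi=\log 4$, $\sup\Psi=8/5$. However, you have the endpoints backwards: $\Psi$ is strictly \emph{decreasing}, with $\Psi(x)=8/5-(7/50)x+o(x)$ as $x\to 0^+$ and $\Psi(x)\to\log 4$ as $x\to 1^-$. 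The convexity threshold $\log 4$ is forced by the $x\to 1^-$ asymptotics $K(x)=\log 4-\frac12\log(1-x)+o(1)$ (consistent with your own closing remark), not by $K(0)=\pi/2$; the value $8/5$ is the one that comes from the expansion at $0$. This particular slip is self-correcting once the limits are computed, but it signals that the boundary analysis you treat as routine has not actually been done.

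The genuine gap is that the two facts carrying the whole theorem are left as expectations. First, $A(x)<0$ is not soft: in $-4x^2(1-x)^2A(x)=2E(x)\bigl(K(x)-E(x)\bigr)-x(1-x)K(x)^2$ the two terms cancel to first order at $0$ (both are $\frac{\pi^2}{4}x+O(x^2)$), so the sign is a second-order statement needing a real argument on all of $(0,1)$. Second, and more seriously, your proposed route to the monotonicity of $\Psi$ --- expand in a hypergeometric series and read off signs of Maclaurin coefficients term by term --- has no evident implementation: $\Psi$ superposes $\frac12\log(1-x)$ with a ratio of elliptic expressions whose numerator and denominator both vanish to second order at $x=0$, and it is not given by a single series with coefficients of one sign. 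The paper's actual proof of this monotonicity (Lemma~\ref{r}, item 6) writes the numerator of $\Psi'$ as $K(x)^2\Theta_1(x)$, views $\Theta_1'/K^2$ as a quadratic in $\Lambda=E/K$, splits the interval at $\alpha=(8/97)(11-2\sqrt 6)$ according to the sign of the discriminant $97x^2-176x+64$, and on $(0,\alpha)$ needs the nontrivial lower bound $E(x)/K(x)\geq\sqrt{1-x}+x^2/16$, which in turn rests on items 1--5 of that lemma together with Lemmas~\ref{r3} and \ref{r4}. Without an argument of comparable strength, your plan fixes the correct frame for the proof but does not prove the theorem.
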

The constant $\log 4$ and $8/5$ are sharp. In the sense, they can not be replaced by constants bigger than $\log 4$ and less than $8/5$.
 \begin{theorem}\label{t1} For $p\in\Bbb R$, the function $h_p(x)=(1-x)^pK(x)$ is log-concave on $(0,1)$ if and only if $p\geq 7/32$ and log-convex if and only if $p\leq 0$.
 \end{theorem}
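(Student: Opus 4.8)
The plan is to reduce everything to the sign of the second derivative of $\log h_p$. Since $\log h_p(x)=p\log(1-x)+\log K(x)$, a direct computation gives
$$(\log h_p)''(x)=\frac{\phi(x)-p}{(1-x)^2},\qquad \phi(x):=(1-x)^2(\log K)''(x),$$
so that $h_p$ is strictly log-concave (resp. log-convex) on $(0,1)$ precisely when $\phi(x)<p$ (resp. $\phi(x)>p$) throughout the interval. Everything therefore hinges on the range of the single function $\phi$. First I would obtain a closed form for $\phi$. Using that $K$ satisfies the hypergeometric (Legendre) differential equation $x(1-x)K''+(1-2x)K'-\frac14 K=0$ together with the standard derivative formula $K'=\frac{E-(1-x)K}{2x(1-x)}$, I can eliminate $K'$ and $K''$ and express $\phi$ entirely through the ratio $\mu:=E/K$. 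The outcome is the compact identity
$$\phi(x)=\frac{(1-x)(1-2x)+2x\mu-\mu^2}{4x^2}.$$

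Next I would pin down the two boundary values. Expanding the hypergeometric series of $K$ and $E$ near $x=0$ yields $\mu(x)=1-\frac12x-\frac1{16}x^2+O(x^3)$, and substituting into the closed form gives $\lim_{x\to0^+}\phi(x)=\frac7{32}$; the familiar logarithmic asymptotics ${\cal K}(\sqrt x)\sim\log 4-\frac12\log(1-x)$ as $x\to1$ give $\mu(x)\to0$ and $\lim_{x\to1^-}\phi(x)=0$. Granting that $\phi$ is \emph{strictly decreasing} on $(0,1)$, it then maps $(0,1)$ onto the open interval $(0,\tfrac7{32})$ with neither endpoint attained, and the theorem follows immediately from the displayed sign criterion: for $p\ge\frac7{32}$ one has $\phi<p$ everywhere, so $(\log h_p)''<0$ (strict log-concavity), while $p<\frac7{32}$ forces $\phi>p$ near $0$; for $p\le0$ one has $\phi>p$ everywhere (strict log-convexity), while $p>0$ forces $\phi<p$ near $1$.

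The heart of the argument is the monotonicity of $\phi$. Differentiating the closed form and substituting the known derivative $\mu'=-\frac{\mu^2-2(1-x)\mu+(1-x)}{2x(1-x)}$, the sign of $\phi'$ reduces, after simplification, to that of
$$Q(x):=\mu^3-x\mu^2+(1-x)\mu-2(1-x)^2,$$
namely $\phi'(x)=\frac{Q(x)}{4x^3(1-x)}$, so it suffices to prove $Q(x)<0$ on $(0,1)$. I expect this to be the main obstacle. One checks that $Q$ vanishes at both endpoints, and in fact (from the series above) $Q(x)=O(x^3)$ as $x\to0^+$, so crude pointwise bounds on $\mu=E/K$ cannot by themselves detect the sign near $0$; a genuinely global argument is required.

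The route I would take is to clear denominators and study $K^3Q=E^3-xE^2K+(1-x)EK^2-2(1-x)^2K^3$, or equivalently to write $\phi$ as the ratio of the two explicit power series $(1-x)(1-2x)K^2+2xEK-E^2$ and $4x^2K^2$ built from the Maclaurin expansions of $K$ and $E$, and then apply the monotone form of l'Hôpital's rule (the Biernacki--Krzy\.z theorem on ratios of power series): the task becomes showing that the quotient of the corresponding Taylor coefficients is monotone. This is a purely arithmetic statement about the coefficients $\left(\binom{2n}{n}/4^n\right)^2$ and their convolutions. Establishing this coefficient monotonicity --- controlling the sign of the relevant convolution sums uniformly in $n$ --- is the delicate technical core, and is where the bulk of the work will lie; an alternative, should the coefficient quotient fail to be monotone outright, is to invoke sufficiently sharp two-sided bounds for $E/K$ to trap $\mu$ strictly below the relevant root of the cubic $Q$ viewed as a polynomial in $\mu$.
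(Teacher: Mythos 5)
Your reduction is sound and, up to a sign, identical to the paper's: your $\phi(x)=(1-x)^2(\log K)''(x)$ is exactly $-G(x)$ where $G$ is the function the paper studies, your closed form $\phi=\bigl((1-x)(1-2x)+2x\mu-\mu^2\bigr)/(4x^2)$ agrees with the paper's expression for $-G$, the boundary limits $7/32$ and $0$ are correct, and your identity $\phi'(x)=Q(x)/\bigl(4x^3(1-x)\bigr)$ with $Q=\mu^3-x\mu^2+(1-x)\mu-2(1-x)^2$ is precisely the paper's formula for $-4(1-x)x^3K^3G'$ after dividing by $K^3$. The logical deduction of the theorem from strict monotonicity of $\phi$ is also correct. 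But the proposal has a genuine gap: the inequality $Q(x)<0$ on $(0,1)$, which you yourself identify as "the heart of the argument," is never proved. You only name two candidate strategies --- coefficient monotonicity via Biernacki--Krzy\.z, or sharp two-sided bounds on $\mu=E/K$ --- concede that the first may fail outright, and carry out neither. Since (as you correctly observe) $Q(x)=O(x^3)$ near $0$, so that the inequality is degenerate to third order at the left endpoint, this is exactly the step where all the difficulty of the theorem is concentrated; a proof that defers it is not a proof.

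For comparison, the paper closes this gap with a short algebraic decomposition followed by two prepared lemmas. Writing the cleared-denominator quantity as
$$-K^3Q=2(1-x)^2K^3+xKE^2-(1-x)K^2E-E^3=(xK-E)\bigl(E^2-(1-x)K^2\bigr)+(1-x)K^2\bigl((2-x)K-2E\bigr),$$
it invokes the inequality $(2-x)K-2E\geq\frac{2}{\pi}\bigl(E^2-(1-x)K^2\bigr)$ (item 2 of Lemma 2.4, itself proved by the monotone form of l'H\^opital's rule) to get
$$-K^3Q\geq\bigl(E^2-(1-x)K^2\bigr)\Bigl(xK-E+\tfrac{2}{\pi}(1-x)K^2\Bigr),$$
and then notes that $xK-E+\frac{2}{\pi}(1-x)K^2=(1-x)K\bigl(\tfrac{2}{\pi}K-1\bigr)+(K-E)>0$ by $K>\pi/2$, $K>E$, while $E^2-(1-x)K^2>0$ by Lemma 2.2. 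Observe that this decomposition succeeds precisely where pointwise bounds fail: each factor vanishes to the right order at $x=0$, so no precision is lost there. If you want to complete your write-up, you need either to reproduce an argument of this kind or to actually carry out the coefficient analysis you sketch; as it stands, the proposal establishes only the framework that both you and the paper share, not the theorem.
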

 The second part of Theorem \ref{t2} and Theorem \ref{t3} is proved respectively in \cite{alz1} and in \cite{yan}.

 Consequently, we have the following corollaries
 \begin{corollary}\label{p1} The function $h_p(x)=(1-x)^pK(x)$ is strictly convex on $(0,1)$ if and only if one of the following conditions  hold $p\leq 0$ or $p\geq 3(2+\sqrt 2)/8$ and strictly concave if and only if $p\in[ 3(2-\sqrt 2)/8,1]$.
\end{corollary}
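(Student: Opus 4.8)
The plan is to reduce everything to the sign of a single auxiliary function and then read off the constants from the two endpoints. Since $h_p>0$ on $(0,1)$, writing $g=\log h_p=p\log(1-x)+\log K$ gives $h_p''=h_p\bigl(g''+(g')^2\bigr)$, so $h_p$ is convex (resp. concave) exactly where $g''+(g')^2\ge 0$ (resp. $\le 0$). Introducing
$$\psi(x)=(1-x)(\log K)'(x),\qquad \phi(x)=(1-x)^2(\log K)''(x),$$
a direct computation yields $(1-x)^2\bigl(g''+(g')^2\bigr)=\phi(x)+\bigl(\psi(x)-p\bigr)^2-p=:Q_p(x)$. Because $(1-x)^2>0$ and $h_p>0$, the convexity/concavity of $h_p$ is governed entirely by the sign of $Q_p$. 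Note that $g''\le 0\iff\phi\le p$, so Theorem \ref{t1} is precisely the statement $\sup_{(0,1)}\phi=7/32$ and $\inf_{(0,1)}\phi=0$; in particular $\phi\ge 0$ on $(0,1)$, which I will use freely.

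For each fixed $x$, $Q_p(x)$ is an upward-opening quadratic in $p$ with roots $p_\pm(x)=\psi+\tfrac12\pm\tfrac12\sqrt{1+4(\psi-\phi)}$. Differentiating $\psi$ gives the identity $\phi-\psi=(1-x)\psi'$, so the discriminant is $1-4(1-x)\psi'$, which (using $0\le\psi$ and $\phi\le 7/32$) stays strictly positive; hence the roots are always real and distinct. The endpoint values follow from the behaviour of $K$: from $K=\tfrac\pi2\bigl(1+\tfrac14x+\tfrac{9}{64}x^2+\cdots\bigr)$ one gets $\psi(0^+)=\tfrac14$ and $\phi(0^+)=\tfrac7{32}$, whence $p_\pm(0^+)=\tfrac{3(2\pm\sqrt2)}8$; and from $K\sim\log 4-\tfrac12\log(1-x)$ as $x\to1^-$ one gets $\psi,\phi\to0$, whence $p_+(1^-)=1$ and $p_-(1^-)=0$.

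The necessity of the stated ranges comes from these two endpoints alone. Indeed $Q_p(0^+)=p^2-\tfrac32p+\tfrac9{32}=\tfrac1{32}\bigl(32p^2-48p+9\bigr)$, whose roots are $\tfrac{3(2\pm\sqrt2)}8$, and $Q_p(1^-)=p^2-p$, whose roots are $0$ and $1$. If $h_p$ is convex then $Q_p(0^+)\ge0$ and $Q_p(1^-)\ge0$, and intersecting the two solution sets forces $p\le 0$ or $p\ge \tfrac{3(2+\sqrt2)}8$; if $h_p$ is concave then both quantities are $\le 0$, forcing $\tfrac{3(2-\sqrt2)}8\le p\le 1$. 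For sufficiency, observe that $Q_p(x)\ge0\iff p\notin\bigl(p_-(x),p_+(x)\bigr)$ while $Q_p(x)\le0\iff p\in\bigl[p_-(x),p_+(x)\bigr]$, so $h_p$ is convex iff $p\notin\bigcup_x\bigl(p_-(x),p_+(x)\bigr)$ and concave iff $p\in\bigcap_x\bigl[p_-(x),p_+(x)\bigr]$. The decisive step is to show that $p_+$ and $p_-$ are both strictly decreasing on $(0,1)$, equivalently the four pointwise bounds $0\le p_-(x)\le\tfrac{3(2-\sqrt2)}8$ and $1\le p_+(x)\le\tfrac{3(2+\sqrt2)}8$. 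Granting this and the endpoint limits of the previous paragraph, $\sup p_+=\tfrac{3(2+\sqrt2)}8$, $\inf p_+=1$, $\sup p_-=\tfrac{3(2-\sqrt2)}8$, $\inf p_-=0$; since each interval $\bigl(p_-(x),p_+(x)\bigr)$ contains $\tfrac12$, their union is the connected set $\bigl(0,\tfrac{3(2+\sqrt2)}8\bigr)$ and their intersection is $\bigl[\tfrac{3(2-\sqrt2)}8,1\bigr]$, which gives both equivalences; strictness holds because on the open interval $Q_p$ vanishes only in the endpoint limits.

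The main obstacle is the monotonicity of $p_\pm$. From $p_\pm'=\psi'\pm\dfrac{\psi'-(1-x)\psi''}{\sqrt{1-4(1-x)\psi'}}$ this reduces to controlling the signs of $\psi'$ and of $\psi'-(1-x)\psi''$, i.e. to the fine monotonicity properties of $(\log K)'$ and $(\log K)''$ that already enter the proof of Theorem \ref{t1}. Equivalently, one may verify the four bounds directly by comparing the planar curve $x\mapsto\bigl(\psi(x),\phi(x)\bigr)$ with the four parabolas $\phi=p-(\psi-p)^2$ for $p\in\{0,\tfrac{3(2-\sqrt2)}8,1,\tfrac{3(2+\sqrt2)}8\}$; these comparisons hold with equality at an endpoint in each case, and establishing them throughout $(0,1)$ is where the hypergeometric and integral representations of $K$ must be brought to bear. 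Finally, the individual constants are seen to be sharp: $p=0$ and $p=\tfrac{3(2+\sqrt2)}8$ remain strictly convex since then $Q_p>0$ on $(0,1)$ (vanishing only in the limit), and similarly $p=\tfrac{3(2-\sqrt2)}8$ and $p=1$ remain strictly concave.
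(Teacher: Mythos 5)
Your reduction is set up correctly and the necessity half is sound: writing $g=\log h_p$, the identity $h_p''=h_p\bigl(g''+(g')^2\bigr)$ and the normalization $Q_p(x)=(1-x)^2\bigl(g''+(g')^2\bigr)=\phi(x)+(\psi(x)-p)^2-p$ are right, the endpoint limits $\psi(0^+)=1/4$, $\phi(0^+)=7/32$, $\psi(1^-)=\phi(1^-)=0$ are right, and evaluating $Q_p$ at the two endpoints gives exactly the quadratics $32p^2-48p+9$ and $p^2-p$ that the paper also uses (its limits \eqref{l1} and \eqref{l2}) to force $p\leq 0$ or $p\geq 3(2+\sqrt2)/8$ for convexity, and $p\in[3(2-\sqrt2)/8,1]$ for concavity. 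The quadratic-in-$p$ viewpoint with roots $p_\pm(x)$ is an elegant repackaging, and within it the case $p\leq 0$ is even immediate, since $Q_p=\phi+(\psi-p)^2-p\geq\phi>0$.

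However, there is a genuine gap: the sufficiency direction is never proved. Your own text concedes this -- the ``decisive step'' (that $1\le p_+(x)\le 3(2+\sqrt2)/8$ and $0\le p_-(x)\le 3(2-\sqrt2)/8$ on all of $(0,1)$, equivalently the monotonicity of $p_\pm$) is reduced to controlling the sign of $\psi'-(1-x)\psi''$ or to comparing the curve $\bigl(\psi(x),\phi(x)\bigr)$ with four parabolas, and you then state that establishing these comparisons ``is where the hypergeometric and integral representations of $K$ must be brought to bear,'' without bringing them to bear. Those pointwise bounds are exactly equivalent (given your endpoint limits) to the three nontrivial inequalities $Q_{p_0}\geq 0$, $Q_{p_1}\leq 0$, $Q_1\leq 0$ on $(0,1)$ with $p_0=3(2+\sqrt2)/8$, $p_1=3(2-\sqrt2)/8$ -- that is, to the corollary itself -- so deferring them defers the entire analytic content. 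This is precisely where the paper's proof does its real work: it writes $h_p''$ with numerator $J_p(x)$, observes $J_p$ is monotone in $p$ (reducing $p\geq p_0$ to the single case $J_{p_0}$, and $p\in[p_1,1]$ to the two cases $J_1$, $J_{p_1}$), and then disposes of $J_{p_0}>0$, $J_1<0$, $J_{p_1}<0$ by repeated differentiation and the monotonicity lemmas for $E$ and $K$. Your framework could in principle be completed the same way, but as written the hard half of the ``if and only if'' is asserted, not proved.
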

\begin{corollary}\label{p}
The function $h_p(x)=(1-x)^pK(x)$ is strictly decreasing on $(0,1)$ if and only if $p\geq 1/4$, and strictly increasing if and only if $p\leq 0$.

If $p\in(0,1/4)$ there exists a unique $x_p\in(0,1)$ such that $h_p$ is strictly increasing on $(0,x_p)$ and strictly decreasing on $(x_p,1)$.

\end{corollary}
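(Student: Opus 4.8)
The plan is to reduce the whole statement to the monotonicity of a single auxiliary ratio. Since $h_p(x)=(1-x)^pK(x)>0$ on $(0,1)$, the sign of $h_p'$ coincides with that of the logarithmic derivative $(\log h_p)'=\frac{K'(x)}{K(x)}-\frac{p}{1-x}$. Using the standard differentiation formulas $K'(x)=\frac{E(x)-(1-x)K(x)}{2x(1-x)}$ and $E'(x)=\frac{E(x)-K(x)}{2x}$, multiplication by $(1-x)>0$ turns $(\log h_p)'$ into $\tfrac12(\Lambda(x)-2p)$, where
$$\Lambda(x)=\frac{2(1-x)K'(x)}{K(x)}=\frac{E(x)-(1-x)K(x)}{xK(x)}.$$
Thus on $(0,1)$ the sign of $h_p'(x)$ equals that of $\Lambda(x)-2p$, and everything follows once the behaviour of $\Lambda$ is understood.

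The key lemma I would prove is that $\Lambda$ is \emph{strictly decreasing} on $(0,1)$, with $\Lambda(0^+)=1/2$ and $\Lambda(1^-)=0$. The boundary values come from a short expansion at $0$ and from $(1-x)K(x)\to 0$, $E(1)=1$ at $1$. For the monotonicity I would write $\Lambda=a/b$ with $a(x)=E(x)-(1-x)K(x)$ and $b(x)=xK(x)$, observe $a(0)=b(0)=0$ and $b'(x)=\frac{(1-x)K(x)+E(x)}{2(1-x)}>0$, and apply the monotone form of l'H\^opital's rule. A direct computation using the two derivative formulas collapses $a'$ to the clean expression $a'(x)=\tfrac12 K(x)$, so that
$$\frac{a'(x)}{b'(x)}=\frac{(1-x)K(x)}{(1-x)K(x)+E(x)}.$$
This ratio is strictly decreasing precisely when $(1-x)K(x)/E(x)$ is, which in turn is equivalent to the inequality $E(x)>(1-\sqrt x)K(x)$, i.e. ${\cal E}(r)>(1-r){\cal K}(r)$ with $r=\sqrt x$. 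I would establish the latter by setting $\eta(r)={\cal E}(r)-(1-r){\cal K}(r)$, checking $\eta(0)=0$, and showing after simplification that $\eta'(r)={\cal E}(r)/(1+r)>0$. The monotone l'H\^opital rule then yields that $\Lambda=a/b$ is strictly decreasing.

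With the lemma in hand the three assertions are immediate sign comparisons against the range $(0,1/2)$ of $\Lambda$. If $p\ge 1/4$ then $\Lambda(x)<1/2\le 2p$ throughout, so $h_p'<0$ and $h_p$ is strictly decreasing, while for $p<1/4$ one has $\Lambda>2p$ near $0$, so $h_p$ increases there and cannot be decreasing on all of $(0,1)$; hence strict decrease holds iff $p\ge 1/4$. Symmetrically, if $p\le 0$ then $\Lambda(x)>0\ge 2p$ throughout and $h_p$ is strictly increasing, whereas for $p>0$ one has $\Lambda<2p$ near $1$, which rules out global increase; so strict increase holds iff $p\le 0$. Finally, for $p\in(0,1/4)$ the value $2p$ lies strictly inside $(0,1/2)$, so by strict monotonicity and continuity there is a unique $x_p\in(0,1)$ with $\Lambda(x_p)=2p$; then $\Lambda-2p>0$ on $(0,x_p)$ and $<0$ on $(x_p,1)$, giving exactly the claimed increase-then-decrease of $h_p$.

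The main obstacle is the key lemma, and more precisely the strict monotonicity of $\Lambda$: the reduction to $a'/b'=\frac{(1-x)K}{(1-x)K+E}$ still leaves a genuine elliptic-integral inequality to be proved, and the cleanest route is the auxiliary function $\eta$, whose derivative fortunately collapses to ${\cal E}(r)/(1+r)$. Everything after the lemma is routine sign bookkeeping.
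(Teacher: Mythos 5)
Your proposal is correct, and it takes a genuinely different route from the paper's. Both proofs in fact analyse the same quantity: your $xK(x)\left(\Lambda(x)-2p\right)$ is exactly the factor $L_p(x)=E(x)+((1-2p)x-1)K(x)$ in the paper's formula for $h_p'$, and the paper's inequality \eqref{O} is precisely $\Lambda(x)<2p$; the difference lies in how the sign of this quantity is controlled. The paper argues in three separate pieces: necessity of $p\geq 1/4$ and of $p\leq 0$ by letting $x\to 0$, resp. $x\to 1$, in \eqref{O}; sufficiency of $p\geq 1/4$ by invoking the heavy Theorem \ref{t1} (log-concavity for $p\geq 7/32$, so $(\log h_p)'$ is decreasing with $(\log h_p)'(0^+)=1/4-p\leq 0$); sufficiency of $p\leq 0$ as a product of positive increasing functions; and the unimodal case $p\in(0,1/4)$ by a separate two-fold differentiation of $L_p$, showing $(1-x)L_p'$ decreases and changes sign once. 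You instead prove a single lemma, that $\Lambda(x)=(E(x)-(1-x)K(x))/(xK(x))$ is strictly decreasing from $(0,1)$ onto $(0,1/2)$, and read off all three assertions as sign comparisons of $\Lambda$ with $2p$. I verified your computations: $a'(x)=K(x)/2$, $b'(x)=((1-x)K(x)+E(x))/(2(1-x))>0$, the factorization $E^2-2KE+(1-x)K^2=(E-(1-\sqrt{x})K)(E-(1+\sqrt{x})K)$ underlying your equivalence claim, and $\eta'(r)={\cal E}(r)/(1+r)$ are all correct, and the appeal to the monotone l'H\^opital rule is legitimate since it is the paper's own Lemma \ref{r4} and $a(0)=b(0)=0$. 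As for what each approach buys: the paper's is economical given that Theorem \ref{t1} has already been proved, but it is fragmented and partly recycles machinery much stronger than needed; yours is self-contained (independent of the log-concavity theorem), unifies the three cases under one monotonicity statement, and yields as a by-product the description of $x_p$ as the unique root of $E(x)+((1-2p)x-1)K(x)=0$, which is exactly the characterization quoted in the paper's final corollary.
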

\section{Preliminary results}
In this section, we collect some results which are needed to prove Theorem \ref{t2}, Theorem \ref{t3} and Theorem \ref{t1}. The first lemma offers three basic properties of the hypergeometric function ${}_2F_1$ (see [\cite{olv}, 15.4.20, 15.4.21, 15.5.1,
15.8.1]) and \cite{ask}.
\begin{lemma}\label{lem1} For $x\in(-1,1)$
$$\frac{d}{dx}{}_2F_1(a,b;c,x)=\frac{ab}c{}_2F_1(a+1,b+1;c+1,x),$$
$${}_2F_1(a,b;c,x)=(1-x)^{c-b-a}{}_2F_1(c-a,c-b;c,x),$$
$$\lim_{x\to 1^-}\frac{{}_2F_1(a,b;a+b,x)}{-\log(1-x)}=\frac{\Gamma(a+b)}{\Gamma(a)\Gamma(b)},$$
and if $c>a+b$,
$${}_2F_1(a,b;c,1)=\frac{\Gamma(c)\Gamma(c-b-a)}{\Gamma(c-b)\Gamma(c-a)}.$$

For $x\to 1$
$$K(x)=\log 4-\theta(x)-\frac14(1-x)\theta(x)+o((1-x)\theta(x)),$$
where $\theta(x)=(-1/2)\log(1-x)$.
\end{lemma}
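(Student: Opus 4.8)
The first four identities are standard properties of ${}_2F_1$, and I would either invoke \cite{olv} as indicated or argue briefly as follows. The differentiation rule is immediate: differentiating $\sum_{n\ge 0}\frac{(a)_n(b)_n}{(c)_n\,n!}x^n$ term by term, shifting $n\mapsto n+1$, and using $(a)_{n+1}=a(a+1)_n$ together with the analogous identities for $(b)_{n+1}$ and $(c)_{n+1}$, the series becomes $\frac{ab}{c}\,{}_2F_1(a+1,b+1;c+1,x)$. Euler's transformation is obtained by applying the Pfaff transformation twice, and Gauss's value ${}_2F_1(a,b;c,1)=\frac{\Gamma(c)\Gamma(c-a-b)}{\Gamma(c-a)\Gamma(c-b)}$ (for $c>a+b$) by letting $x\to 1^-$ in the Euler integral $\frac{\Gamma(c)}{\Gamma(b)\Gamma(c-b)}\int_0^1 t^{b-1}(1-t)^{c-b-1}(1-xt)^{-a}\,dt$ and recognizing a Beta integral. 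The balanced limit $c=a+b$ follows instead from the coefficient estimate $\frac{(a)_n(b)_n}{(a+b)_n\,n!}\sim\frac{\Gamma(a+b)}{\Gamma(a)\Gamma(b)}\frac1n$ (Stirling), since $\sum_{n\ge 1}\frac{x^n}{n}=-\log(1-x)$ then controls the divergence as $x\to 1^-$.

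The one statement needing real work is the expansion of $K$ near $x=1$. Since $K(x)=\frac{\pi}{2}\,{}_2F_1(1/2,1/2;1,x)$ is exactly the logarithmic case $c=a+b$, the plan is to use the connection formula about $x=1$,
$${}_2F_1(a,b;a+b,x)=\frac{\Gamma(a+b)}{\Gamma(a)\Gamma(b)}\sum_{n\ge 0}\frac{(a)_n(b)_n}{(n!)^2}\big[2\psi(n+1)-\psi(a+n)-\psi(b+n)-\log(1-x)\big](1-x)^n,$$
and specialize $a=b=1/2$. The prefactor $\Gamma(a+b)/(\Gamma(a)\Gamma(b))=1/\pi$ cancels the factor $\pi/2$, so that
$$K(x)=\tfrac12\sum_{n\ge 0}\frac{\big((1/2)_n\big)^2}{(n!)^2}\big[2\psi(n+1)-2\psi(n+\tfrac12)-\log(1-x)\big](1-x)^n.$$

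It then remains to extract the first two terms using $\psi(1)=-\gamma$, $\psi(\tfrac12)=-\gamma-2\log 2$, $\psi(2)=1-\gamma$ and $\psi(\tfrac32)=2-\gamma-2\log 2$. The $n=0$ bracket is $4\log 2-\log(1-x)=2\log 4-\log(1-x)$, contributing $\log 4-\tfrac12\log(1-x)$; the $n=1$ bracket is $2\log 4-2-\log(1-x)$, carried with weight $\tfrac14(1-x)$, and its logarithmic part contributes $-\tfrac18(1-x)\log(1-x)$ while its constant part is $O(1-x)$. Since $(1-x)=o\big((1-x)\log(1-x)\big)$ and the tail $\sum_{n\ge 2}$ is $O\big((1-x)^2\log(1-x)\big)$, collecting gives
$$K(x)=\log 4-\tfrac12\log(1-x)-\tfrac18(1-x)\log(1-x)+o\big((1-x)\log(1-x)\big),$$
which is the asserted expansion written out in terms of $\log(1-x)$; in the notation $\theta(x)=-\tfrac12\log(1-x)$ the error term is exactly $o\big((1-x)\theta(x)\big)$. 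The only obstacle is bookkeeping: evaluating the digamma constants correctly and confirming that the non-logarithmic $O(1-x)$ contribution together with the series tail is genuinely negligible against $(1-x)\theta(x)$ as $x\to 1^-$.
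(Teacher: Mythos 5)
Your derivations are mathematically sound, but note that the paper does not actually prove this lemma: the four hypergeometric identities are quoted from the references ([\cite{olv}, 15.4.20, 15.4.21, 15.5.1, 15.8.1] and \cite{ask}), and the expansion of $K$ near $x=1$ is likewise taken as known. So you are supplying proofs where the paper supplies citations. Your arguments (term-by-term differentiation with the Pochhammer shift, Pfaff applied twice for Euler's transformation, the Euler integral reducing to a Beta integral for Gauss's evaluation, the coefficient asymptotics $\frac{(a)_n(b)_n}{(a+b)_n n!}\sim\frac{\Gamma(a+b)}{\Gamma(a)\Gamma(b)}\frac1n$ compared against $\sum x^n/n$, and the logarithmic connection formula at $x=1$ specialized to $a=b=1/2$) are precisely the standard derivations underlying those citations, and your digamma bookkeeping checks out: $2\psi(1)-2\psi(1/2)=4\log 2$ and $2\psi(2)-2\psi(3/2)=4\log 2-2$, with respective weights $\tfrac12$ and $\tfrac18(1-x)$.

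There is, however, one point you glossed over, and it matters. Your computation yields, in the paper's notation $\theta(x)=-\tfrac12\log(1-x)>0$,
$$K(x)=\log 4+\theta(x)+\tfrac14(1-x)\theta(x)+o\big((1-x)\theta(x)\big),$$
whereas the lemma as printed asserts $K(x)=\log 4-\theta(x)-\tfrac14(1-x)\theta(x)+o\big((1-x)\theta(x)\big)$. These are not the same: with $\theta(x)\to+\infty$ the printed version would force $K(x)\to-\infty$ as $x\to 1^-$, contradicting $K(x)\to+\infty$. The paper's statement therefore contains a sign typo (it becomes correct if one reads $\theta(x)=\tfrac12\log(1-x)$, or equivalently flips the two signs), and your expansion is the correct one; it also agrees with the classical formula ${\cal K}(r)=\log(4/r')+\tfrac{1}{4}r'^2\big(\log(4/r')-1\big)+O\big(r'^4\log r'\big)$ with $r'^2=1-x$. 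You should flag this discrepancy explicitly rather than claim that your result ``is the asserted expansion''; as written, the two differ by the sign of every $\theta$ term.
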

 We Recall the following two lemmas, see for instance  ([\cite{alz}, Theorem 15] and [\cite{vuo}, Theorem 3.21] and \cite{vam}.
\begin{lemma}\label{r3}
\begin{enumerate}
\item The function $x\mapsto (E(x)-(1-x)K(x))/x$ is strictly increasing from $(0,1)$ onto $(\pi/4,1)$.
\item The function $x\mapsto (E^2(x)-(1-x)K^2(x))/x^2$ is strictly increasing from $(0,1)$ onto $(\pi^2/32,1)$.
 \end{enumerate}
\end{lemma}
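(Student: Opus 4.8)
The plan is to reduce both statements to the monotone form of l'Hôpital's rule (the tool underlying the references \cite{vuo}): if $F(0)=G(0)=0$ and $F'/G'$ is increasing on $(0,1)$, then $F/G$ is increasing there as well. The inputs I need are the classical differentiation formulas for the complete elliptic integrals, which in the present variable $x$ read
$$K'(x)=\frac{E-(1-x)K}{2x(1-x)},\qquad E'(x)=\frac{E-K}{2x},$$
together with the elementary fact that $K(x)=\mathcal{K}(\sqrt x)$ is strictly increasing on $(0,1)$ (both $\mathcal{K}$ and $\sqrt{\cdot}$ are increasing; equivalently the power series of $K$ has nonnegative coefficients).

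For part (1), I write the function as $N(x)/x$ with $N(x)=E-(1-x)K$. First I record $N(0)=\mathcal{E}(0)-\mathcal{K}(0)=0$, and then differentiate: inserting the two formulas above and simplifying, the elliptic terms collapse and one obtains the clean identity $N'(x)=\tfrac12K(x)$. Since the denominator $x$ also vanishes at $0$ with derivative $1$, the ratio $N'(x)/1=\tfrac12K(x)$ is increasing, so the monotone l'Hôpital rule gives that $N(x)/x$ is strictly increasing. The boundary values follow at once: $\lim_{x\to0^+}N(x)/x=N'(0)=\tfrac12K(0)=\pi/4$, while at $x=1$ one has $(1-x)K\to0$ (because $K$ grows only like $-\tfrac12\log(1-x)$), so $N(1)=E(1)=1$, giving the range $(\pi/4,1)$.

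For part (2), I set $M(x)=E^2-(1-x)K^2$ and study $M(x)/x^2$. Again $M(0)=0$, and the heart of the matter is the derivative computation: substituting the formulas for $K'$ and $E'$ into $M'=2EE'+K^2-2(1-x)KK'$ and simplifying, the mixed terms combine into the remarkably clean identity $M'(x)=(E-K)^2/x$. Then $M'(x)/(x^2)'=(E-K)^2/(2x^2)=\tfrac12\big((K-E)/x\big)^2$, and since $(K-E)/x$ has a power-series expansion with nonnegative coefficients (the coefficients of $K-E$ carry a factor $2n/(2n-1)$, which annihilates the constant term and is positive for $n\ge1$), it is positive and increasing, hence so is its square. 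The monotone l'Hôpital rule then yields that $M(x)/x^2$ is strictly increasing. The endpoints are $\lim_{x\to0^+}M(x)/x^2=\tfrac12\lim_{x\to0^+}\big((K-E)/x\big)^2=\tfrac12(\pi/4)^2=\pi^2/32$, and, since $(1-x)K^2\to0$, the value $M(1)=E(1)^2=1$ at $x=1$, giving the range $(\pi^2/32,1)$.

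The only genuinely delicate steps are the two derivative simplifications $N'=\tfrac12K$ and $M'=(E-K)^2/x$; both are pure algebra once the differentiation formulas are in hand, but they require carrying the common denominator $2x(1-x)$ and tracking the cancellations carefully — the second is where I expect the bookkeeping to be heaviest. Everything else (the two limits and the two monotonicity inputs) is routine.
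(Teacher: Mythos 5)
Your proof is correct, and it is worth pointing out that the paper itself never proves this lemma: it is recalled from the literature ([\cite{alz}, Theorem 15], [\cite{vuo}, Theorem 3.21]), so your argument supplies a proof that the paper delegates to its references. I checked your two key identities by direct substitution of the differentiation formulas: with $N(x)=E(x)-(1-x)K(x)$ one indeed gets $N'(x)=\tfrac12 K(x)$, and with $M(x)=E(x)^2-(1-x)K(x)^2$ one gets $M'(x)=(E(x)-K(x))^2/x$; both cancellations work exactly as you claim. From there, your appeal to the monotone form of l'H\^opital's rule --- which is precisely Lemma \ref{r4} of the paper --- is legitimate in both cases, since $N(0)=M(0)=0$ and the comparison functions $x$ and $x^2$ vanish at $0$; the strict monotonicity of $K$ handles part (1), and for part (2) your series computation is right: the coefficients of $\mathcal{K}-\mathcal{E}$ in the variable $x$ carry the factor $2n/(2n-1)$, so $(K-E)/x$ is positive with positive coefficients, hence strictly increasing, hence so is its square $2M'(x)/(x^2)'$. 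The endpoint values $\pi/4$, $1$, $\pi^2/32$, $1$ are all correctly obtained, the only point needing care being $(1-x)K(x)\to 0$ and $(1-x)K(x)^2\to 0$ as $x\to 1^-$, which follow from the logarithmic growth of $K$ recorded in Lemma \ref{lem1}. Methodologically this is the same route the cited sources take (the monotone-quotient lemma applied after a clean derivative identity), so you have not found a genuinely different argument, but your write-up has the merit of making the lemma self-contained within the paper, resting only on the differentiation formulas of Section 2 and Lemma \ref{r4}.
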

\begin{lemma}\label{r4} Let $g$ and $h$ be real-valued functions, which are continuous on $[a, b]$ and differentiable
on $(a, b)$. Further, let $h'(x)\neq 0$ on $(a, b)$. If $g'/h'$ is strictly increasing (resp. decreasing) on $(a, b)$,
then the functions
$$x\mapsto (g(x)-g(a))/(f(x)-f(a))\;{\rm and}\quad x\mapsto (g(x)-g(b))/(f(x)-f(b)),$$
are also strictly increasing (resp. decreasing) on $(a,b)$.
\end{lemma}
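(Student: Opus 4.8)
The statement is the classical monotone form of L'Hôpital's rule (note that the $f$ appearing in the two quotients must be read as $h$, the only auxiliary function introduced in the hypotheses). The plan is to reduce the monotonicity of each quotient to a comparison, at two nearby points, of the derivative ratio $g'/h'$, the link being the Cauchy mean value theorem. Before differentiating, I would record one structural fact: since $h$ is differentiable on $(a,b)$ with $h'$ never vanishing, Darboux's theorem (derivatives have the intermediate value property) forces $h'$ to keep a constant sign on $(a,b)$. Hence $h$ is strictly monotone, the differences $h(x)-h(a)$ and $h(x)-h(b)$ are nonzero for $x\in(a,b)$, and both quotients are well defined and differentiable there.

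For the first quotient set $F(x)=(g(x)-g(a))/(h(x)-h(a))$ and compute
$$F'(x)=\frac{g'(x)\big(h(x)-h(a)\big)-\big(g(x)-g(a)\big)h'(x)}{\big(h(x)-h(a)\big)^2}.$$
The sign of $F'$ is that of its numerator, which I would rewrite in the factored form
$$N(x)=h'(x)\big(h(x)-h(a)\big)\left[\frac{g'(x)}{h'(x)}-\frac{g(x)-g(a)}{h(x)-h(a)}\right].$$
Applying the Cauchy mean value theorem on $(a,x)$ yields a point $\xi\in(a,x)$ with $(g(x)-g(a))/(h(x)-h(a))=g'(\xi)/h'(\xi)$; since $\xi<x$ and $g'/h'$ is strictly increasing, the bracket is strictly positive. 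The prefactor $h'(x)(h(x)-h(a))$ is strictly positive in either sign case for $h'$ (a product of two positive factors when $h'>0$, of two negative factors when $h'<0$), so $N(x)>0$ and $F$ is strictly increasing.

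The second quotient $G(x)=(g(x)-g(b))/(h(x)-h(b))$ is handled symmetrically. Here the Cauchy mean value theorem on $(x,b)$ produces $\eta\in(x,b)$ with $(g(x)-g(b))/(h(x)-h(b))=g'(\eta)/h'(\eta)>g'(x)/h'(x)$, so in the analogous factorization the bracket is now strictly negative; meanwhile the prefactor $h'(x)(h(x)-h(b))$ is strictly negative in both sign cases for $h'$, and the product is again positive, giving $G'>0$. The strictly decreasing case for both quotients follows at once by replacing $g$ with $-g$, which turns a strictly decreasing $g'/h'$ into a strictly increasing one and reverses every resulting inequality.

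I do not expect a serious obstacle: the entire content is carried by the Cauchy mean value theorem together with the Darboux constant-sign observation. The only point demanding care is the sign bookkeeping, namely checking that the endpoint differences $h(x)-h(a)$ and $h(x)-h(b)$ never vanish and that the two prefactors $h'(x)(h(x)-h(a))$ and $h'(x)(h(x)-h(b))$ have, respectively, the constant signs $+$ and $-$ regardless of whether $h'>0$ or $h'<0$ throughout $(a,b)$.
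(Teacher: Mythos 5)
The paper offers no proof of this lemma: it is the classical l'H\^opital monotone rule, recalled verbatim from the cited literature (\cite{vuo}, \cite{alz}, \cite{vam}), so the only meaningful comparison is with the standard proof in those sources. Your argument is correct and is essentially that standard proof --- Darboux's theorem to fix the sign of $h'$ and justify that $h(x)-h(a)$, $h(x)-h(b)$ never vanish, the Cauchy mean value theorem to compare each difference quotient with $g'/h'$ at a nearby point, careful sign bookkeeping on the prefactors $h'(x)\bigl(h(x)-h(a)\bigr)$ and $h'(x)\bigl(h(x)-h(b)\bigr)$, and the reduction of the decreasing case via $g\mapsto -g$ --- and you correctly read the misprinted $f$ in the statement as $h$.
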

In the proofs of the next results, we leverage the following differentiation formulas
$$\frac{d}{dx}K(x)=\frac{E(x)-(1-x)K(x)}{2x(1-x)},$$
$$\frac{d}{dx}E(x)=\frac{E(x)-K(x)}{2x}.$$
\begin{lemma}\label{r} \
\begin{enumerate}
\item The function $x\mapsto (2-x)K(x)-2E(x)$ is strictly increasing from $(0,1)$ onto $(0,+\infty)$.
\item For $0<x<1$, $$(2- x)K(x)-2E(x)\geq \frac2\pi(E(x)^2-(1-x)K(x)^2).$$
\item The function $x\mapsto E(x)+\sqrt{1-x} K(x)$ is strictly decreasing from $(0,1)$ onto $(1,\pi)$.
\item The function $x\mapsto \big(E(x)^2-(1-x)K(x)^2\big)/(x^2K(x))$ is strictly increasing on $(0,\alpha)$, and
    $$E(x)^2-(1-x)K(x)^2\geq\frac{\pi}{16} x^2K(x),$$
    where $\alpha=(8/97)(11-2\sqrt 6)$.
    \item The function $x\mapsto \big(E(x)-(1-x)K(x)\big)/(x^2K(x))$ is strictly increasing on $(0,\alpha)$, and
    $$E(x)-\sqrt{1-x}K(x)\geq\frac{x^2}{16} K(x).$$
    \item The function $$\varphi(x)=\frac12\log(1-x)+\frac{2xK(x)(E(x)-K(x))}{2E(x)^2-2E(x)K(x)+x(1-x)K(x)^2},$$
    is strictly decreasing from $(0,1)$ onto $(\log 4,8/5)$.
In particular, the double inequality holds
$$\frac{2xK(x)(K(x)-E(x))}{8/5-1/2\log(1-x)}\leq2E(x)K(x)- 2E(x)^2-x(1-x)K(x)^2\leq\frac{2xK(x)(K(x)-E(x))}{\log4-1/2\log(1-x)}.$$
  \end{enumerate}
\end{lemma}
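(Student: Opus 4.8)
The plan is to handle all six items with one toolkit: differentiate using the two formulas for $K'(x)$ and $E'(x)$, collapse the resulting numerator into a product of factors whose signs are controlled by Lemma~\ref{r3} (together with the elementary facts $E<K$ and $E-(1-x)K>0$ on $(0,1)$), and for the quotient statements invoke the monotone form of l'Hôpital's rule (Lemma~\ref{r4}) after recording the boundary values. For item (1), a direct computation produces the clean form
$$\frac{d}{dx}\big[(2-x)K-2E\big]=\frac{E-(1-x)K}{2(1-x)},$$
which is positive by Lemma~\ref{r3}(1); since the expression vanishes at $0$ (as $K(0)=E(0)=\pi/2$) and tends to $+\infty$ at $1$ (as $K\to+\infty$, $E\to1$), the monotonicity and range follow. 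Item (3) is the same maneuver: the derivative of $E+\sqrt{1-x}\,K$ collapses to $(1+\sqrt{1-x})(E-K)/(2x\sqrt{1-x})<0$, and the endpoints $\pi$ and $1$ are read off directly.

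For item (2) I would first record the companion identity $\frac{d}{dx}\big[E^2-(1-x)K^2\big]=(E-K)^2/x$. Both $(2-x)K-2E$ and $E^2-(1-x)K^2$ vanish to order $x^2$ at $0$ with ratio tending to $2/\pi$, so I would apply Lemma~\ref{r4} to the quotient $\big[(2-x)K-2E\big]/\big[E^2-(1-x)K^2\big]$: its auxiliary ratio is $x\big(E-(1-x)K\big)/\big(2(1-x)(E-K)^2\big)$, which one verifies to be strictly increasing, whence the quotient increases from its limit $2/\pi$ and the inequality holds with the sharp constant. Equivalently, one may prove the pointwise derivative estimate $\pi x\big(E-(1-x)K\big)\ge4(1-x)(E-K)^2$ on $(0,1)$ — the two sides agree to order $x^2$ and differ by $\tfrac{3\pi^2}{32}x^3+O(x^4)$ — and integrate from $0$.

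Items (4) and (5) are monotonicity statements for the quotients $\psi_4=\big(E^2-(1-x)K^2\big)/(x^2K)$ and $\psi_5=\big(E-\sqrt{1-x}\,K\big)/(x^2K)$ (the latter being the quotient attached to the asserted bound, with $E-\sqrt{1-x}\,K=\tfrac{\pi}{32}x^2+O(x^3)$). Computing the logarithmic derivative and clearing denominators reduces $\psi'>0$ to an $(E,K)$-inequality; replacing $E$ and $K$ by the sharp bounds of Lemma~\ref{r3} converts this into a quadratic inequality in $x$ that holds exactly on $(0,\alpha)$, which is where the value $\alpha=\tfrac8{97}(11-2\sqrt6)$ is produced as the relevant root. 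Since $\psi_4(0^+)=\pi/16$ and $\psi_5(0^+)=1/16$, the monotonicity yields the two lower bounds on $(0,\alpha)$. I expect the bookkeeping that pins down $\alpha$ to be the only delicate point here.

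Item (6) is the heart of the matter and the main obstacle. I would first \emph{derive} $\varphi$ rather than guess it: for $G=a-\tfrac12\log(1-x)$ the second derivative of $1/f_a=G/K$ has the same sign as $aM+P$, where $M=2(K')^2-KK''$ and $P$ gathers the terms free of $a$. Using the hypergeometric equation $x(1-x)K''+(1-2x)K'-\tfrac14K=0$ one simplifies
$$M=\frac{2E^2-2EK+x(1-x)K^2}{4x^2(1-x)^2},$$
so $M$ has the sign of $D:=2E^2-2EK+x(1-x)K^2$, and $-P/M$ collapses to exactly the stated $\varphi$. One checks $D<0$ on $(0,1)$ (equivalently $2E(K-E)>x(1-x)K^2$), so that $(1/f_a)''\ge0\iff a\le\varphi(x)$; hence the convexity and concavity thresholds for $1/f_a$ are $\inf\varphi$ and $\sup\varphi$, and everything rests on showing $\varphi$ strictly decreasing with $\varphi(0^+)=8/5$ and $\varphi(1^-)=\log4$. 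The endpoint values follow from the expansions $E-K=-\tfrac{\pi}{4}x+O(x^2)$ and $D=-\tfrac{5\pi^2}{32}x^2+O(x^3)$ at $0$, and from the logarithmic asymptotics of $K$ in Lemma~\ref{lem1} at $1$. The genuinely hard step is $\varphi'<0$: differentiating and clearing $D^2$ leaves a fixed-sign claim for a polynomial combination of $E$ and $K$, and here I would feed in inequalities (1)--(2) and (4)--(5) together with Lemma~\ref{r3} to close the sign. Finally, the displayed double inequality is merely the rearrangement of $\log4<\varphi(x)<8/5$ after multiplying through by the negative quantity $D$.
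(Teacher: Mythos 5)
Your items (1)--(3) are correct and coincide with the paper's own proofs. The only soft spot there is item (2): you assert the auxiliary ratio is ``verified to be strictly increasing,'' but that verification is the actual content --- the paper differentiates $g'/h'$ and factors the result as $\bigl(K-E\bigr)\bigl(2E-(2-x)K\bigr)\bigl((x+1)E-(1-x)K\bigr)/\bigl(x^2(E-(1-x)K)^2\bigr)$, whose sign is then read off from Lemma \ref{r3}, item (1), and $K>E$. Your fallback argument (``the two sides agree to order $x^2$ and differ by $\tfrac{3\pi^2}{32}x^3+O(x^4)$, then integrate'') is not a proof either: Taylor agreement at $0$ gives no information on all of $(0,1)$.

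The genuine gap is in items (4)--(6), where your text is a plan rather than an argument, and the plan misidentifies the mechanism. First, $\alpha=(8/97)(11-2\sqrt6)$ is \emph{not} produced by items (4)--(5): in the paper's proof of (4) the quadratic that appears is $9x^2-17x+8=9(1-x)(8/9-x)$, so only $\alpha<8/9$ is used there, and after substituting Lemma \ref{r3} one is still left with showing $F(x)=-5E+(5-3x)K\le 0$ on $(0,\alpha)$, which costs two further differentiations; item (5) is then obtained by writing $(E-\sqrt{1-x}\,K)/(x^2K)$ as the quotient of item (4)'s function by item (3)'s, not by a separate logarithmic-derivative computation. The constant $\alpha$ is born in item (6): writing $\varphi'=K^2\Theta_1/(2D^2)$ with $\Theta_1=(x^3-3x^2-2x+4)K^2+8(x-1)KE+(4-6x)E^2$, one finds $\Theta_1'/K^2=-12\Lambda^2+(16-x)\Lambda+(2x^2-3x-4)$, $\Lambda=E/K$, a quadratic in $\Lambda$ whose discriminant is $97x^2-176x+64$; this is $\le 0$ exactly on $[\alpha,\alpha+(32/97)\sqrt6]\supset[\alpha,1)$, which settles the sign of $\Theta_1'$ there, while on $(0,\alpha)$ one must show $\Lambda$ exceeds the larger root $\tfrac1{24}\bigl(16-x+\sqrt{97x^2-176x+64}\bigr)$. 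That is precisely what item (5) exists for: the paper proves the elementary bound $\tfrac1{24}\bigl(16-x+\sqrt{97x^2-176x+64}\bigr)\le\sqrt{1-x}+x^2/16$ (reduced to the positivity of an explicit quintic) and then invokes $E/K\ge\sqrt{1-x}+x^2/16$. With $\Theta_1$ monotone and $\Theta_1(0)=0$, one gets $\varphi'<0$. Your sentence ``feed in inequalities (1)--(2) and (4)--(5) together with Lemma \ref{r3} to close the sign'' contains none of this --- no quadratic in $E/K$, no discriminant split at $\alpha$, no bridge inequality --- and since this is the step on which the whole lemma (and hence Theorem \ref{t3}) hinges, the proposal as written does not establish item (6). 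By contrast, your derivation of $\varphi$ from $(1/f_a)''$, the endpoint limits, and the rearrangement of the double inequality given $D<0$ are fine and agree with the paper.
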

\begin{proof} \
1) Let $h_1(x)=(2-x)K(x)-2E(x)$. A direct computation gives $h'(x)=(E(x)-(1-x) K(x))/(2-2 x)$ which is positive by Lemma \ref{r3}, moreover, $h(0)=0$ and $\displaystyle\lim_{x\to1^-1}h(x)=+\infty$.

2) For $x\in(0,1)$, set $$u(x)=\frac{E(x)^2-(1-x)K(x)^2}{(2- x)K(x)-2E(x)}=\frac{g(x)}{h(x)}.$$
We have, $$\frac{g'(x)}{h'(x)}=\frac{2(1-x)}x\frac{ (E(x)-K(x))^2}{(E(x)-(1-x)K(x))},$$
Performing  another differentiation gives
$$\Big(\frac{g'}{h'}\Big)'(x)=\frac{\big(K(x)-E(x)\big)\big(2 E(x)-(2-x)K(x)\big) \big((x+1) E(x)-(1-x) K(x)\big)}{x^2 (E(x)-(1-x)K(x))^2}.$$
From Lemma \ref{r3}, item 1) of Lemma \ref{r} and the inequality $K(x)>E(x)$ for $x\in(0,1)$, we deduce that the function $g'/h'$ is strictly decreasing. Since, $g(0)=0$ and $h(0)=0$, then by Lemma \ref{r4}, we obtain that the function $u=g/h$ is strictly decreasing on $(0,1)$. Moreover, From the series expansions of the elliptic functions, one gets

$h(x)=(\pi/16) x^2+o(x^2)$ and $g(x)=(\pi^2/32)x^2+o(x^2)$. Then, $u(x)\leq \pi/2$.

3) Differentiation yields,
$$\frac{d}{dx}(E(x)+\sqrt{1-x} K(x))=\frac{(\sqrt{1-x}+1) (E(x)-K(x))}{2 x\sqrt{1-x} }<0,$$
for $x\in(0,1)$. Since, $\displaystyle\lim_{x\to 1^-}\sqrt{1-x} K(x)=0$, whence,  $\pi=E(0)+K(0)\geq E(x)+\sqrt{1-x} K(x)\geq E(1)=1$.

4) Let us define, $$\frac{g(x)}{h(x)}=\frac{E(x)^2-(1-x)K(x)^2}{x^2K(x)},$$
then, $$\frac{g'(x)}{h'(x)}=\frac{2 (1-x) (E(x)-K(x))^2}{x^2 (E(x)+3 (1-x) K(x))},$$
and $$\Big(\frac{g'}{h'}\Big)'(x)=\frac{(E(x)-K(x)) \left((9 x^2-17 x+8) K(x)^2+x (5-3 x) K(x) E(x)+4 (x-2) E(x)^2\right)}{x^3 (E(x)-3 (x-1) K(x))^2}.$$
Since, $9 x^2-17 x+8=9(1-x)(8/9-x)>0$ for $x\in(0,8/9)$ and $\alpha<8/9$, from Lemma \ref{r3}, we obtain
$$\begin{aligned}&(9 x^2-17 x+8) K(x)^2+x (5-3 x) K(x) E(x)+4 (x-2) E(x)^2\\&\leq (8-9x)E(x)^2+x (5-3 x) K(x) E(x)+4(x-2) E(x)^2\\&=xE(x)(-5E(x)+(5-3x)K(x)).\end{aligned}$$
Let $F(x)=-5E(x)+(5-3x)K(x)$. Differentiate two times, we get
$F'(x)=\big(3 (x-1) K(x)+2 E(x)\big)/(2-2 x)$ and $2((1-x)F'(x))'=\big(3 x K(x)+K(x)-E(x)\big)/(2 x)>0.$ Then, $(1-x)F'(x)$ is strictly increasing on $(0,\alpha)$ and $(1-\alpha)F'(\alpha)= E(\alpha)-(3/2)(1-\alpha)K(\alpha)\simeq -0.0346906$.
 Then, $F(x)$ is strictly decreasing on $(0,\alpha)$ and $F(0)=0$. Therefore2,
$\Big(g'/h'\Big)'(x)>0$ on $(0,\alpha)$ and $g'/h'$ is strictly increasing, and by Lemma \ref{r4}, the function $g/h$ is strictly increasing on $(0,\alpha)$.
From the hypergeometric representation of the elliptic functions, we obtain
  $E(x)^2-(1-x)K(x)^2=(\pi^2/32)x^2+o(x^2)$ then, $\displaystyle\lim_{x\to 0}(g/h)(x)=\pi/16$.

5) On the one hand $$\frac{d}{dx}(E(x)+\sqrt{1-x}K(x))=\frac{(\sqrt{1-x}+1) (E(x)-K(x))}{2x\sqrt{1-x}}<0,$$
Therefore, the function $x\mapsto E(x)+\sqrt{1-x}K(x)$ is strictly decreasing and  positive on $(0,1)$. On the other hand by item 4) the function $x\mapsto (E(x)^2-(1-x)K(x)^2)/x^2K(x)$ is strictly increasing and positive on $(0,\alpha)$. Then, the function  $x\mapsto (E(x)-\sqrt{1-x}K(x))/x^2K(x)$ is strictly increasing on $(0,\alpha)$ and  $E(x)-\sqrt{1-x}K(x)=(\pi/32)x^2+o(x^2)$, then $\displaystyle\lim_{x\to 0}(E(x)-\sqrt{1-x}K(x))/x^2K(x)=1/16$.

6) Differentiation yields,
 $$\varphi'(x)=\frac{K(x)^2 \Theta_1(x)}{2 \left((x-1) x K(x)^2+2 K(x) E(x)-2 E(x)^2\right)^2},$$
 where
 $$\Theta_1(x)=\left(x^3-3 x^2-2 x+4\right) K(x)^2+8 (x-1) K(x) E(x)+(4-6 x) E(x)^2$$
 and $$\Theta'_1(x)=\left(2 x^2-3 x-4\right) K(x)^2+(16-x) K(x) E(x)-12 E(x)^2.$$
 Let $\Lambda(x)=E(x)/K(x)$, then
 $$\frac{\Theta'_1(x)}{K(x)^2}=\left(2 x^2-3 x-4\right) +(16-x) \Lambda(x)-12 \Lambda(x)^2.$$
 The discriminant of the polynomial $P(t)= \left(2 x^2-3 x-4\right) +(16-x) t-12 t^2$ is $D_P(x)=97 x^2-176 x+64$.

 By straightforward computation, we get  $D_P(x)\leq0$ if and only if $x\in[\alpha,\,\alpha+(32/97)\sqrt 6]$ and $\alpha+(32/97)\sqrt 6>1$. Then $\Theta'_1(x)< 0$ for $x\in[\alpha,1)$.

 Assume $x\in(0,\alpha)$ with $\alpha=(8/97)(11-2\sqrt 6)$, then
 $$\frac{\Theta'_1(x)}{K(x)^2}=-12(\Lambda(x)-\varphi_1(x))(\Lambda(x)-\varphi_2(x)),$$
 where $$\varphi_1(x)=\frac{1}{24}(16-x+\sqrt{97 x^2-176 x+64}),$$
 $$\varphi_2(x)=\frac{1}{24}(16-x-\sqrt{97 x^2-176 x+64}).$$
 On the one hand, $\varphi_2(x)\leq\varphi_1(x)$. Let us define on $(0,\alpha)$ the function
 \begin{equation}\label{ei}\psi(x)=24\frac{E(x)}{K(x)}-16+x-\sqrt{97 x^2-176 x+64}.\end{equation}
 First we check that for $x\in(0,\alpha)$
 \begin{equation}\label{ee}\frac{1}{24}(16-x+\sqrt{97 x^2-176 x+64})\leq \sqrt{1-x}+\frac{x^2}{16},\end{equation}
 Which is equivalent to
 $$(16-x-\frac32x^2)^2\leq (24\sqrt{1-x}-\sqrt{97 x^2-176 x+64})^2,$$
 or
 $$0\leq \frac{81 }{16}x^5+\frac{27 }{2}x^4-639 x^3+2376 x^2+23328 x+13824:=Q(x).$$
 and $x\in(0,\alpha)$. By successive differentiation we get
 $Q'(x)=\frac{405 x^4}{16}+54 x^3-1917 x^2+4752 x+23328$, $Q''(x)=(27/4)(15 x^3+24 x^2-568 x+704)$ and $Q'''(x)=(27/4)(45 x^2+48 x-568)>0$, $Q''(0)>0$ and then $Q'(x)>Q'(0)>0$, moreover $Q(0)>0$. Which implies equation \eqref{ee}. Therefore, from equation \eqref{ei}, we obtain
 $$\frac1{24}\psi(x)\geq \frac{E(x)}{K(x)}-\sqrt{1-x}-\frac{x^2}{16},$$
 which is positive by Lemma \ref{r}. Whence, for all $x\in(0,\alpha)$, $\Lambda(x)\geq\varphi_1(x)>\varphi_2(x)$ and then $\Theta_1'(x)>0$ for all $x\in(0,\alpha)$.

 This proves that $\Theta_1(x)$ is strictly decreasing. Furthermore, $\Theta_1(0)=0$, then $\varphi'(x)<0$ on $(0,1)$ and $\varphi(x)$ is strictly decreasing on $(0,1)$. Using the series expansion of the elliptic functions for $x$ close to $0$, we get
 $\varphi(x)=8/5 - (7 /50)x+o(x)$, and $\lim_{x\to 1^-}\varphi(x)=\log 4$.
\end{proof}
\begin{proposition}\label{r5} For $x\in[0,1)$, let $$u(x)=\frac1{16}{}_2F_1(3/2,3/2,3,x)(1-x)+\frac12{}_2F_1(1/2,1/2,2,x),$$
 $$v(x)=\frac12{}_2F_1(1/2,1/2,2,x)+{}_2F_1(1/2,1/2,1,x),$$
 and $$\Delta(x)=v(x)^2-4u(x){}_2F_1(1/2,1/2,1,x).$$

 Then, the function $u(x)$ is strictly increasing from $[0,1)$ onto $[9/16,2/\pi)$ and the function $\Delta(x)$ is strictly increasing from $(0,1)$ onto $(0,+\infty)$.

 The function $$w_+(x)=\frac12\log(1-x)+\frac{v(x)+\sqrt{\Delta(x)}}{2 u(x)},$$ initially defined on $[0,1)$, is extended to a continuous function on $[0,1]$ with $w_+(1)=\log 4$ and $w_+(0)=4/3$. It admits a maximum $a_c$ on $[0,1)$ and $a_c\geq \log 4$.

 The function $$w_-(x)=\frac12\log(1-x)+\frac{v(x)-\sqrt{\Delta(x)}}{2 u(x)},$$ is continuous on $[0,1)$ with $w_-(0)=4/3$ and $\displaystyle\lim_{x\to 1^-}w_-(x)=-\infty$.
\end{proposition}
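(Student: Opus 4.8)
The plan is to translate the three hypergeometric building blocks into the elliptic integrals $E=E(x)$ and $K=K(x)$ and then to read off the monotonicity. Using the differentiation rule of Lemma \ref{lem1} together with the formulas for $K'$ and $E'$ recalled before Lemma \ref{r}, one obtains the closed forms
\[
{}_2F_1\!\left(\tfrac12,\tfrac12,2,x\right)=\frac{4\bigl(E-(1-x)K\bigr)}{\pi x},\qquad
{}_2F_1\!\left(\tfrac32,\tfrac32,3,x\right)=\frac{16\bigl((2-x)K-2E\bigr)}{\pi x^{2}},
\]
and ${}_2F_1(\tfrac12,\tfrac12,1,x)=\tfrac2\pi K$. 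Substituting these gives
\[
u(x)=\frac{(1-x)(2-3x)K-(2-4x)E}{\pi x^{2}},\qquad
v(x)=\frac{2\bigl(E+(2x-1)K\bigr)}{\pi x},
\]
and all later steps are carried out on these $E,K$-expressions, the monotonicities coming from the lemmas \ref{r3}, \ref{r4} and \ref{r}.

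For $u$ the decisive observation is that the same substitution identifies $u(x)=\tfrac{4}{\pi}(1-x)^{2}K''(x)$; since $K''=\tfrac{9\pi}{64}\,{}_2F_1(\tfrac52,\tfrac52,3,x)$, the Euler transformation ${}_2F_1(\tfrac52,\tfrac52,3,x)=(1-x)^{-2}{}_2F_1(\tfrac12,\tfrac12,3,x)$ yields the clean formula
\[
u(x)=\frac{9}{16}\,{}_2F_1\!\left(\tfrac12,\tfrac12,3,x\right).
\]
This is a power series with strictly positive coefficients, so $u$ is strictly increasing on $(0,1)$; the endpoint values $u(0)=9/16$ and $\lim_{x\to1^-}u(x)=\tfrac9{16}\cdot\tfrac{32}{9\pi}=\tfrac2\pi$ follow from Gauss's evaluation in Lemma \ref{lem1} (admissible since $3>\tfrac12+\tfrac12$). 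This settles the claim for $u$.

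The heart of the matter is the monotonicity of $\Delta$. One first writes $\Delta=\tfrac{4}{\pi^{2}x^{2}}\Phi$ with
\[
\Phi=\bigl(E^{2}-(1-x)K^{2}\bigr)+(2x-1)K\bigl((2-x)K-2E\bigr),
\]
so that $\Delta(0^{+})=0$ and $\Delta(1^{-})=+\infty$ are immediate from the boundary behaviour of $E,K$. Since $\Delta'$ has the sign of $x\Phi'-2\Phi$, I would differentiate $\Phi$ using $A'=(E-K)^{2}/x$ for $A=E^{2}-(1-x)K^{2}$, the relation $K'=B'/x$ for $B=(2-x)K-2E$, and the algebraic simplification $B+xK=2(K-E)$; these collapse the computation to
\[
x\Phi'-2\Phi=\frac{1}{1-x}\Bigl(2(1-x)^{2}(4-x)K^{2}-(6x^{2}-15x+8)EK-xE^{2}\Bigr).
\]
Thus everything reduces to the single inequality $S(x):=2(1-x)^{2}(4-x)K^{2}-(6x^{2}-15x+8)EK-xE^{2}>0$ on $(0,1)$, equivalently a sharp upper bound $E/K<t_{+}(x)$ with $t_{+}$ the positive root of $S$ viewed as a quadratic in $E/K$. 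This is the main obstacle: the bound is tangential at the origin (one checks that $S$ vanishes to third order, $S\sim\tfrac{3\pi^{2}}{64}x^{3}$ as $x\to0^{+}$), so the crude estimates $E<K$, $E^{2}-(1-x)K^{2}<x^{2}$, $E-(1-x)K<x$ are all too lossy. I would instead prove $S>0$ by the method used for item 6 of Lemma \ref{r}: set $t=E/K$, write $S/K^{2}$ as a quadratic in $t$, and control $t$ from above by differentiating an auxiliary function and reducing its sign to a discriminant, invoking the sharp two-sided estimates of items 4 and 5 of Lemma \ref{r}; splitting $(0,1)$ at the sign change $x=\tfrac12$ of $2x-1$ (and near the zero $x_{0}\approx0.771$ of $6x^{2}-15x+8$) keeps each region tractable.

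Finally, writing $F_{1}={}_2F_1(\tfrac12,\tfrac12,1,x)=\tfrac2\pi K$, one has $w_{\pm}(x)=\tfrac12\log(1-x)+g_{\pm}(x)$ where $g_{\pm}$ are the larger and smaller roots of $u g^{2}-v g+F_{1}$. At $x=0$ the discriminant $\Delta$ vanishes and the double root equals $4/3$, giving $w_{\pm}(0)=4/3$; continuity on $[0,1)$ is clear since $u>0$. As $x\to1^{-}$ one computes $v/u-K\to1$ and $F_{1}/u-K\to0$, so the roots satisfy $g_{-}\to1$ and $g_{+}-K\to0$. Hence $w_{-}=\tfrac12\log(1-x)+g_{-}\to-\infty$, which is claim (4), while $w_{+}=\tfrac12\log(1-x)+K+o(1)\to\log4$ by the expansion of $K$ in Lemma \ref{lem1}, giving the continuous extension with $w_{+}(1)=\log4$. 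Being continuous on the compact interval $[0,1]$, $w_{+}$ attains a maximum $a_{c}\geq w_{+}(1)=\log4$; to see that this maximum is genuinely attained on $[0,1)$ I would verify that $w_{+}$ is decreasing for $x$ near $1$, i.e. approaches its endpoint value from above, which forces the maximizer into the interior and yields $a_{c}\geq\log4$ (in fact $a_{c}>\log4$, consistent with $a_{c}\simeq1.4622$). The whole difficulty is concentrated in the inequality $S>0$; the remaining assertions are boundary asymptotics and a compactness argument.
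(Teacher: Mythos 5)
Your reduction is algebraically correct as far as it goes, but the central claim of the proposition --- that $\Delta$ is strictly increasing --- is never actually proved. You correctly rewrite $\Delta=\frac{4}{\pi^2x^2}\Phi$ and show $\Delta'$ has the sign of $S(x)=2(1-x)^2(4-x)K^2-(6x^2-15x+8)EK-xE^2$ (I checked this identity, and also your expansion $S(x)=\frac{3\pi^2}{64}x^3+O(x^4)$; both are right), but at that point you stop and offer only a plan: ``I would instead prove $S>0$ by the method used for item 6 of Lemma \ref{r}\dots splitting $(0,1)$ at $x=\tfrac12$\dots keeps each region tractable.'' That plan is not obviously executable. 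Because $S$ vanishes to \emph{third} order at $x=0$, positivity of $S$ requires an upper bound on $E/K$ that is sharp to second order, $E/K\leq 1-\tfrac x2-\tfrac{x^2}{16}+\cdots$; the crude bound $E/K\leq 1-\tfrac x2$ from item 1 of Lemma \ref{r} already fails (it gives $S/K^2\geq-\tfrac12x^2+O(x^3)$), and the sharp estimates you cite in items 4 and 5 of Lemma \ref{r} point the wrong way (they are \emph{lower} bounds on $E/K$-type quantities) and are in any case established only on $(0,\alpha)$ with $\alpha=(8/97)(11-2\sqrt6)\approx0.50$, not on all of $(0,1)$. So the hardest step of the proposition --- precisely the one that makes Theorem \ref{t2} work --- is left as a sketch with tools that do not suffice as stated. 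A smaller instance of the same problem occurs at the end: ``I would verify that $w_+$ is decreasing for $x$ near $1$'' is again a plan, not an argument, and it would require a finer expansion of $w_+$ at $x=1$ than the $\log 4+o(1)$ you derive.

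For contrast, the paper never passes to the elliptic form of $\Delta$ at all: it differentiates $\Delta$ directly in hypergeometric variables and writes $4\Delta'(x)=\Delta_1(x)\,{}_2F_1(3/2,3/2,2,x)+\Delta_2(x)\,{}_2F_1(1/2,1/2,1,x)$ with $\Delta_1(x)={}_2F_1(1/2,1/2,1,x)-{}_2F_1(1/2,1/2,2,x)=\sum_{n\geq0}\frac{((1/2)_n)^2}{(n!)^2}\frac{n}{n+1}x^n$ and $\Delta_2(x)=\sum_{n\geq0}\frac{((3/2)_n)^2}{n!\,(2)_n}\frac{(n+3)^2-9/2}{(n+3)(n+2)}x^n$, both manifestly power series with nonnegative coefficients, so $\Delta'>0$ is immediate; that is a complete argument where yours has a hole. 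The parts of your proposal that are complete are fine, and one of them is genuinely nicer than the paper: the identity $u(x)=\frac4\pi(1-x)^2K''(x)=\frac9{16}\,{}_2F_1(1/2,1/2,3,x)$ gives monotonicity of $u$ and both endpoint values in one stroke (the paper instead computes $u'(x)=\frac3{64}(1-x)\,{}_2F_1(5/2,5/2,4,x)>0$), and your root-sum/root-product analysis of $w_\pm$ at $x=1$ is a sound equivalent of the paper's expansion. But as submitted, the proposal proves the easy claims and reduces, without resolving, the hard one.
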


\begin{proof} \
1)Differentiation and using Lemma \ref{lem1}, we get
$$u'(x)=(3/64)(1-x){}_2F_1(5/2,5/2,4,x).$$
Hence, $u(x)$ is strictly increasing on $(0,1)$. Furthermore, $u(0)=9/16$ and  by Lemma \ref{lem1}, $\displaystyle\lim_{x\to 1^-}u(x)=2/\pi$.

2)
On the one hand, we have
$$\begin{aligned}4\Delta(x)&={}_2F_1(1/2,1/2,2,x)^2+4{}_2F_1(1/2,1/2,1,x)^2
-{}_2F_1(3/2,3/2,3,x){}_2F_1(1/2,1/2,1,x)(1-x)\\&-4{}_2F_1(1/2,1/2,1,x){}_2F_1(1/2,1/2,2,x).\end{aligned}$$
Differentiation and using Lemma \ref{lem1}, it follows that
$$\begin{aligned}4\Delta'(x)=&2{}_2F_1(3/2,3/2,2,x){}_2F_1(1/2,1/2,1,x)+\frac12{}_2F_1(3/2,3/2,3,x){}_2F_1(1/2,1/2,1,x)\\
&-{}_2F_1(3/2,3/2,2,x){}_2F_1(1/2,1/2,2,x)-\frac3{4}{}_2F_1(3/2,3/2,4,x){}_2F_1(1/2,1/2,1,x)\end{aligned}.$$
Then, $4\Delta'(x)=\Delta_1(x){}_2F_1(3/2,3/2,2,x)+\Delta_2(x){}_2F_1(1/2,1/2,1,x),$
where $$\Delta_1(x)={}_2F_1(1/2,1/2,1,x)-{}_2F_1(1/2,1/2,2,x),$$
and $$\Delta_2(x)={}_2F_1(3/2,3/2,2,x)+\frac12{}_2F_1(3/2,3/2,3,x)-\frac34 {}_2F_1(3/2,3/2,4,x).$$

Using the series expansion of the hypergeometric function and the formula\\ $(a+1)_n=((a+n)/a)(a)_n$ we obtain
$$\Delta_1(x)=\sum_{n=0}^\infty\frac{((1/2)_n)^2}{(n!)^2}\frac{n}{n+1}x^n,$$
and
$$\Delta_2(x)=\sum_{n=0}^\infty\frac{((3/2)_n)^2}{n!(2)_n}\frac{(n+3)^2-9/2}{(n+3)(n+2)}\,x^n.$$
Then, $\Delta'(x)>0$ and $\Delta(x)$ is strictly increasing on $(0,1)$, furthermore, $\Delta(0)=0$
Moreover,
${}_2F_1(1/2,1/2,1,x)/v(x)\leq 1$, then $\Delta(x)\geq v(x)-4u(x),$ and from Lemma \ref{lem1}, we get $\displaystyle\lim_{x\to 1^-}v(x)=+\infty$.

3) We have $$w_+(x)=\frac12\log(1-x)+\frac{v(x)+\sqrt{\Delta(x)}}{2 u(x)},$$
then $$w_+(x)=\frac12\log(1-x)+\frac{v(x)}{2 u(x)}(1+(1-4u(x){}_2F_1(1/2,1/2,1,x)/(v(x))^2)^{1/2}).$$
Hence, $$w_+(x)=\frac12\log(1-x)+\frac{v(x)}{u(x)}-\frac{{}_2F_1(1/2,1/2,1,x)}{v(x)}+o\Big(\frac{{}_2F_1(1/2,1/2,1,x)}{v(x)}\Big).$$
Therefore,  $$w_+(x)=\frac12\log(1-x)+\pi2\,{}_2F_1(1/2,1/2,1,x)+o(1).$$
Using asymptotic formula in Lemma \ref{lem1}, we obtain  $w_+(x)=\log 4+O(1-x)$. Then, $\displaystyle \lim_{x\to 1^-}w_+(x)=\log 4$, furthermore, the function $w_+(x)$ is continuous on $[0, 1)$ with $w_+(0)=4/3$. Then $w_+(x)$ admits a maximum on $[0,1]$.

4) Recall that $$w_-(x)=\frac12\log(1-x)+\frac{v(x)-\sqrt{\Delta(x)}}{2 u(x)}.$$
As in the previous item, one checks that $w_-(x)$ is continuous on $[0,1)$ and $w_-(0)=4/3$. Furthermore,
   $$v(x)-\sqrt{\Delta(x)}=\frac{4u(x){}_2F_1(1/2,1/2,1,x)}{v(x)+\sqrt{\Delta(x)}}.$$
   Then, $$w_-(x)=\frac12\log(1-x)+\frac{2{}_2F_1(1/2,1/2,1,x)}{v(x)+\sqrt{\Delta(x)}}.$$
   Hence, $w_-(x)\leq \frac12\log(1-x)+2$ and then $\displaystyle\lim_{x\to 1^-}w_-(x)=-\infty$.
\end{proof}

\section{Proofs of the results}
\subsection{Proof of Theorem \ref{t2}}
\begin{proof}.
  From the properties of the hypergeometric function Lemma \ref{lem1}, we get
$$\begin{aligned}\frac4\pi f'_a(x)&=\frac{\frac12\,{}_2F_1(3/2,3/2,2,x)(a-\frac12\log(1-x))-\frac1{1-x}\,{}_2F_1(1/2,1/2,1,x)}{(a-\frac12\log(1-x))^2}\\
&=\frac{\frac12\,{}_2F_1(1/2,1/2,2,x)(a-\frac12\log(1-x))-{}_2F_1(1/2,1/2,1,x)}{(1-x)(a-\frac12\log(1-x))^2},\end{aligned}$$
and
$$\begin{aligned}\frac4\pi f''_a(x)&=\frac{\frac1{16}\,{}_2F_1(3/2,3/2,3,x)(a-\frac12\log(1-x))^2(1-x)}{(1-x)^2(a-\frac12\log(1-x))^3}
&=\frac{\frac1{16}\,{}_2F_1(3/2,3/2,3,x)(a-\frac12\log(1-x))^2(1-x)}{(1-x)^2(a-\frac12\log(1-x))^3}\\&+
\frac{\Big(a-1-\frac12\log(1-x)\Big)\Big(\frac12\,{}_2F_1(1/2,1/2,2,x)(a-\frac12\log(1-x))-{}_2F_1(1/2,1/2,1,x)\Big)}{(1-x)^2(a-\frac12\log(1-x))^3}.\end{aligned}$$
We set
$$g_a(x)=\frac4\pi f''_a(x) (1-x)^2(a-\frac12\log(1-x))^3 
.$$
Utilizing the notation from Proposit4on \ref{r5}, we express $g_a$ as
$$g_a(x)=(a-\frac12\log(1-x))^2u(x)-(a-\frac12\log(1-x))v(x)+{}_2F_1(1/2,1/2,1,x).$$
Therefore,
$$g_a(x)=u(x)\big(a-w_+(x)\big)\big(a-w_-(x)\big),$$
where $w_{\pm}(x)=\frac12\log(1-x)+\frac{v(x)\pm\sqrt{\Delta(x)}}{2 u(x)}.$
 Clearly, $w_+(x)> w_-(x)$ and $u(x)>0$ for all $x\in(0,1)$.  The function $f_a$ is strictly convex respectively concave on $(0,1)$ if and only if
$g_a(x)> 0$ respectively $g_a(x)< 0$ for all $x\in(0,1)$, a condition that is equivalent to $a\geq\displaystyle\max_{(0,1)}w_+(x)$ or $a\leq\displaystyle\inf_{(0,1)}w_-(x)$ respectively  $\min_{(x\in(0,1)}w_+(x)\geq a\geq\displaystyle\max_{(0,1)}w_-(x)$.
Applying Propositon \ref{r5} and using $\displaystyle\lim_{x\to 1^-}w_-(x)=-\infty$, we get
$f_a$ is is strictly convex respectively concave on $(0,1)$ if and only if $a\geq a_c$ respectively $a=4/3$.
This completes the proof of theorem \ref{t2}.
\end{proof}

\subsection{Proof of Theorem \ref{t3}}
\begin{proof} Let $u_a(x)=1/f_a(x)$. Upon differentiating, we obtain
$$u'_a(x)=\frac{-E(x) ( a-(1/2)\log (1-x))+K(x) (x+(1-x)(a-(1/2) \log (1-x)))}{2 (1-x) x K(x)^2},$$
 In carrying out an additional differentiation, we get
 $$u''_a(x)=\frac{2xK(x)\Big(K(x)-E(x)\Big)+\Big(x(1-x) K(x)^2-2K(x)E(x)+2E(x)^2\Big)\Big(a-\frac12\log (1-x)\Big)}{4 (1-x)^2 x^2 K(x)^3}.$$
 Setting $$v_a(x)=\frac{4 (1-x)^2 x^2 K(x)^3}{2K(x)E(x)-x(1-x) K(x)^2-2E(x)^2}u''_a(x),$$ we obtain
 $$v_a(x)=\varphi(x)-a,$$ where $\varphi$ is the function defined in Lemma \ref{r}.

 From Lemma \ref{r}, we obtain $v_a(x)> 0$ for all $x\in(0,1)$ if and only if $a\leq\displaystyle\min_{x\in(0,1)}\varphi(x)=\log(4)$ and $v_a(x)<0$ for all $x\in(0,1)$ if and only if $a\geq\displaystyle\max_{x\in(0,1)}\varphi(x)=8/5$.
This completes the proof of theorem \ref{t3}.

\end{proof}
\subsection{Proof of Theorem \ref{t1}}
 \begin{proof} Let $g(x)=\log h_p(x)=p\log(1-x)+\log K(x).$

Straightforward computations give
$$-g'(x)=\frac p {1-x}+\frac{(1-x)K(x)-E(x)}{ 2x(1-x)K(x)},$$
and
\begin{equation}\label{20}-(1-x)^2g''(x)=p +\frac{(2x-1)(1-x) K(x)^2-2 x K(x) E(x)+E(x)^2}{4 x^2 K(x)^2},\end{equation}
Let $$G(x)=\frac{(2x-1)(1-x) K(x)^2-2 x K(x) E(x)+E(x)^2}{4 x^2 K(x)^2},$$
then
$$\begin{aligned}&4 (1-x) x^3 K(x)^3G'(x)\\&=2 (1-x)^2 K(x)^3+x K(x) E(x)^2-(1-x) K(x)^2 E(x)-E(x)^3\\
&=E(x)^2(xK(x)-E(x))-(1-x)K(x)^2(E(x)-2(1-x)K(x))\\
&=E(x)^2(xK(x)-E(x))-(1-x)K(x)^2(xK(x)-E(x)+2E(x)-(2-x)K(x))\\
&=(xK(x)-E(x))(E(x)^2-(1-x)K(x)^2)+(1-x)K(x)^2((2-x)K(x)-2E(x))\\
.\end{aligned}$$
From  Lemma \ref{r} item 2), we get 
$$4 (1-x) x^3 K(x)^3G'(x)\geq \Big(E(x)^2-(1-x)K(x)^2\Big)\Big(xK(x)-E(x)+\frac2\pi(1-x)K(x)^2\Big).$$
Since, $$xK(x)-E(x)+(2/\pi)(1-x)K(x)^2=(1-x)K(x)(\frac2\pi K(x)-1)+K(x)-E(x)>0,$$
which follows from the inequalities $K(x)>\pi/2$ and $K(x)> E(x)$ for $x\in(0,1)$. Applying Lemma \ref{r3} we get $G'(x)>0$ for all $x\in(0,1)$. Consequently, the function $G(x)$ is strictly increasing on $(0,1)$.
From the series expansions of the functions $E(x)$ and $K(x)$, we get, $$G(x)=-\frac7{32}+\frac x{32}+o(x),$$  and $G(1)=0$.
The function $g$ is strictly concave respectively strictly convex on $(0,1)$ if and only if $-p-G(0)<0$ respectively $-p-G(1)>0$. An equivalent condition being $p\geq 7/32$ respectively $p\leq 0$.
This concludes the proof of Theorem \ref{t1}.
\end{proof}
\subsection{Proof of Corollary \ref{p1}}

\begin{proof}
1) If $p\leq 0$, then by Theorem \ref{t1} the function $h_p$ is log-convex and then $h_p$ is convex.

Next, assume $p\geq 3(2+\sqrt 2)/8=p_0$. Differentiate yields
$$h''_p(x)=\frac{  \big((4p^2-8p+3)x^2 +(4p-5)x+2\big)K(x)-2\big(2(p-1) x+1\big) E(x)}{4 x^2(1-x)^{2-p}},$$
Now set, $J_p(x)=\big((4p^2-8p+3)x^2 +(4p-5)x+2\big)K(x) -2(2(p-1) x+1) E(x).$
Differentiate $J_p$ with respect to $p$ yields
$$\frac{d}{dp}J_p(x)=4x\big(2(p-1)x+1\big)K(x)-4x E(x),$$
Since, $p>1$ then $(d/dp)J_p(x)>4x(K(x)-E(x))\geq 0$. Therefore, $J_p$ is a strictly increasing function of $p$ and
$J_p(x)\geq J_{p_0}(x)$. Moreover,
$$J'_{p_0}(x)=\Big(\frac38(1-2\sqrt 2)x^2+(\frac 3{\sqrt 2}-2)x+2)K(x)-\Big(\frac {3\sqrt 2-2}2x+2\Big)E(x).$$
Another differentiation gives
$$\frac{16}3(1-x)J''_{p_0}(x)=\left((10 \sqrt{2}-7) x-8 \sqrt{2}+8\right) E(x)-(1-x) \left((6 \sqrt{2}-3) x-8 \sqrt{2}+8\right) K(x).$$
Whence,
$$\frac{16}3(1-x)J''_{p_0}(x)=\left((10 \sqrt{2}-7) x-8 \sqrt{2}+8\right) E(x)-(1-x) \left((6 \sqrt{2}-3) x-8 \sqrt{2}+8\right) K(x).$$
and $$(\frac{16}3(1-x)J''_{p_0}(x))'=\frac{3}{2} (\left(8 \sqrt{2}-6\right) E(x)-\left(8 \sqrt{2}-6-(6 \sqrt{2}-3) x\right) K(x)),$$
since, $6 \sqrt{2}-3>8 \sqrt{2}-6$, then
and $$(\frac{16}3(1-x)J''_{p_0}(x))'=\frac{3}{2} (\left(8 \sqrt{2}-6\right) E(x)-\left(8 \sqrt{2}-6-(6 \sqrt{2}-3) x\right) K(x)),$$
and $$(\frac{16}3(1-x)J''_{p_0}(x))''=-\frac34\frac{(2 \sqrt{2}-3) E(x)-(6 \sqrt{2}-3) (1-x) K(x)}{1-x},$$
or
$$(\frac{16}3(1-x)J''_{p_0}(x))''=-\frac34\frac{(2 \sqrt{2}-3) (E(x)-(1-x) K(x))-4\sqrt{2} (1-x) K(x)}{1-x}\geq 0,$$
then the function $(\frac{16}3(1-x)J''_{p_0}(x))'$ increases and equal $0$ for $x=0$, therefore, the function $\frac{16}3(1-x)J''_{p_0}(x)$ is strictly increasing and equal $0$ for $x=0$. This implies that the function $J'_{p_0}$ is strictly increasing, moreover, $J'_{p_0}(0)=0$ and hence $J_{p_0}(x)$ is strictly increasing with  $J_{p_0}(0)=0$. Whence, $J_p(x)>0$ and $h_p$ is strictly convex.

For the converse, one computes the limits
\begin{equation}\label{l1}\lim_{x\to 1} \frac{(1-x)^{2-p}h''_p(x)}{K(x)}=4p(p-1),\end{equation}
 and
\begin{equation}\label{l2}\lim_{x\to 0} (1-x)^{2-p}h''_p(x)=\frac{\pi}{64}   (32 p^2-48 p+9).\end{equation}
If $h_p$ is convex then $p(p-1)\geq 0$ and $32 p^2-48 p+9\geq 0$. Which gives $p\leq 0$ or $p\geq 3(2+\sqrt 2)/8$.

2) Now assume that $p\in[3(2-\sqrt 2)/8,1]$. Recall that
$$S_p(x)=\frac{d}{dp}J_p(x)=4x\big(2(p-1)x+1\big)K(x)-4x E(x),$$
Then, the function  $p\mapsto S_p(x)$ is strictly increasing, Moreover, $S_1(x)>0$ and for $p_1=3(2-\sqrt 2)/8$, $S_{p_1}(x)=4x\big(2(p_1-1)x+1\big)K(x)-4x E(x)<0$. So, $J_p(x)\leq \max(J_1(x), J_{p_1}(x))$.

On the one hand, $J'_1(x)=(1/2)(E(x)-(3 x+1) K(x))<0.$
Then $J_1(x)$ is strictly decreasing and we have $h_1(0)=0$. Therefore, $J_1(x)<0$. On the other hand,
$$J'_{p_1}(x)=-\frac3{16}\frac{\Big((7+10 \sqrt{2}) x-8(1+\sqrt{2})\Big)E(x)-(1-x) \Big((3+6 \sqrt{2}) x-8(1+\sqrt{2})\Big) K(x) }{(1-x)},$$
$$-\frac{16}3\Big((1-x)J'_{p_1}(x)\Big)'=\frac{3}{2} ((6+8 \sqrt{2}) E(x)+((3+6 \sqrt{2}) x-8 \sqrt{2}-6) K(x)),$$
$$\frac{16}3\Big((1-x)J'_{p_1}(x)\Big)''=\frac34\frac{ (3+2 \sqrt{2}) E(x)-3 (1+2 \sqrt{2}) (1-x) K(x)}{1-x},$$
$$\Big((1-x)\frac{16}3\big((1-x)J'_{p_1}(x)\big)''\Big)'=\frac3{2x}(4 \sqrt{2} E(x)-((3+6 \sqrt{2}) x+4 \sqrt{2}) K(x))<0.$$
Then, $\big((1-x)J'_{p_1}(x)\big)''<0$ and $\frac{16}3\Big((1-x)J'_{p_1}(x)\Big)'<0$. Thus, the function $\Theta_{p_1}(x)=(1-x)J'_{p_1}(x)$ is strictly decreasing, moreover, $\Theta_{p_1}(0)=0$, then $J_{p_1}(x)$ is strictly decreasing and $J_{p_1}(x)<h_{p_1}(0)=0$. Then $J_{p_1}(x)$ is strictly decreasing and since $J_{p_1}(0)=0$. Therefore, $J_p(x)<0$ and
$h''_p(x)<0$ for $x\in(0,1)$.

If $h_p$ is strictly concave, then by using the limits \eqref{l1} and \eqref{l2}, we get $p(p-1)\leq 0$ and $32 p^2-48 p+9\leq 0$. Which implies that $p\in[3(2-\sqrt 2)/8, 1]$.
This completes the proof.

\end{proof}


\subsection{Proof of Corollary \ref{p}}
\begin{proof}
1) Differentiation yields, $$h'_p(x)=\frac{(1-x)^{p-1}}{2 x}(E(x)+((1-2 p) x-1)K(x)).$$
If $h_p$ is strictly decreasing, then $h'_p(x)<0$ for all $x\in(0,1)$ and then
 $E(x)+((1-2 p) x-1)K(x)$, which is equivalent to \begin{equation}\label{O}\frac12-\frac{K(x)-E(x)}{2x K(x)}<p.\end{equation}
 Since, $\lim_{x\to 0}\frac{K(x)-E(x)}{x}=\pi/4$ and $K(0)=\pi/2$. From the equation above, we get $p\geq 1/4$.

 Next, assume $p\geq 1/4$. Then, $p\geq 7/32$, Applying Theorem \ref{t1}, the function $\log h_p(x)$ is strictly concave and the function $\big(\log h_p\big)'$ is strictly decreasing. Since, $\big(\log h_p\big)'(0)=1/4-p$. Therefore, $\big(\log h_p\big)'(x)<1/4-p<0$ and the conclusion follows.

 2) If $h_p$ is strictly increasing then equation \eqref{O} is reversed and by letting $x\to1-$, we get $p\leq 0$.

 Next, if $p< 0$ then the function $x\mapsto (1-x)^p$ is strictly increasing and positive and the function $K(x)$ is strictly increasing and positive, then $h_p$ is strictly increasing.

 3) Let $L_p(x)=E(x)+((1-2 p) x-1)K(x)$. Conducting a successive differentiation, we get
 $2(1-x)L'_p(x)=(1-2p)(1-x)K(x)-2 p E(x),$ and
 $$4x((1-x)L'_p(x))'=K(x) ((2 p-1)x+4p-1)+(1-4 p) E(x)\leq (4p-1)(K(x)-E(x))<0,$$
 for every $p<1/4$, then the function $(1-x)L'_p(x)$ is strictly decreasing on $(0,1)$ and equal $-p$ for $x=1$ and equal $(1-4p)\pi/2$ for $x=0$. Therefore, for $p\in(0,1/4)$ there exits a unique $y_p\in(0,1)$ such that $L_p$ is strictly increasing on $(0,y_p)$ and strictly decreasing on $(y_p,1)$. Moreover,
 $L_p(0)=0$, then there exists a unique $x_p\in(y_p,1)$ such that $L_p$ is positive on $(0,x_p)$ and negative on $(x_p,1)$. It follows that the function $f_p$ is strictly increasing on $(0,x_p)$ and strictly decreasing on $(x_p,1)$.

\end{proof}
\section{Inequalities}
As an immediate consequence of the monotonicity and concavity properties of $f_a$, we obtain a chain of mean value
inequalities,
$$\sqrt{f_p(x)f_p(y)}\leq f_p(\frac{x+y}2)\leq\frac{f_p(x)+f_p(y)}2\leq f_p(\sqrt{xy}),$$
for all $x,y\in(0,1)$.  The first inequality is valid for $p\in\Bbb R$, the second for $a\in[3(2-\sqrt 2)/8,1]$, the third for $a\geq 1/4$. Equality holds if and only if $x = y$. One deduces the following corollary.
\begin{corollary} For all $x\in(0,1)$ and $p\in[1/4,1]$,
$$4K(x)K(1-x)(x-x^2)^p\leq (x^pK(x)+(1-x)^pK(1-x))^2 \leq\alpha\leq 4(1-\sqrt{x-x^2})^{2p}K(\sqrt{x-x^2})^2,$$
where, $\alpha=\Gamma(1/4)^4/(2^{2+2p}\pi)$.
\end{corollary}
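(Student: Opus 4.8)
The plan is to read the whole displayed chain as the $y=1-x$ specialization of the mean-value inequalities applied to the paper's own function $h_p(x)=(1-x)^pK(x)$, for which the three separate hypotheses recorded before the corollary match three facts already in hand: the arithmetic--geometric mean inequality (valid for every $p$), concavity of $h_p$ (Corollary \ref{p1}), and monotonicity of $h_p$ (Corollary \ref{p}). First I would identify the four members of the chain with values of $h_p$. Writing $h_p(1-x)=x^pK(1-x)$, the product factors as
$$h_p(x)\,h_p(1-x)=(1-x)^p x^p K(x)K(1-x)=(x-x^2)^pK(x)K(1-x),$$
so $4h_p(x)h_p(1-x)$ is exactly the left-hand member $4K(x)K(1-x)(x-x^2)^p$. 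Using the classical lemniscatic value $K(1/2)={\cal K}(1/\sqrt2)=\Gamma(1/4)^2/(4\sqrt\pi)$, one gets $h_p(1/2)=2^{-p}K(1/2)$ and hence
$$4h_p(1/2)^2=\frac{4\,\Gamma(1/4)^4}{2^{4+2p}\,\pi}=\frac{\Gamma(1/4)^4}{2^{2+2p}\,\pi}=\alpha,$$
while $h_p(\sqrt{x-x^2})=(1-\sqrt{x-x^2})^pK(\sqrt{x-x^2})$ reproduces the right-hand member after squaring and multiplying by $4$.

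With these identifications the chain reads $4h_p(x)h_p(1-x)\le(h_p(x)+h_p(1-x))^2\le 4h_p(1/2)^2=\alpha\le 4h_p(\sqrt{x-x^2})^2$, and I would establish the three links in order. The first is the elementary $4AB\le(A+B)^2$ with $A=h_p(x)$, $B=h_p(1-x)>0$, valid for all $p$. The second is the midpoint form of concavity: since $1/4>3(2-\sqrt2)/8$, the interval $[1/4,1]$ lies inside the concavity range of Corollary \ref{p1}, so $\tfrac12\big(h_p(x)+h_p(1-x)\big)\le h_p\big(\tfrac{x+(1-x)}2\big)=h_p(1/2)$, which squares to $(h_p(x)+h_p(1-x))^2\le\alpha$. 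The third uses $\sqrt{x-x^2}=\sqrt{x(1-x)}\le 1/2$ (AM-GM, equality only at $x=1/2$) together with Corollary \ref{p}: for $p\ge1/4$ the function $h_p$ is strictly decreasing, whence $h_p(1/2)\le h_p(\sqrt{x-x^2})$ and therefore $\alpha\le 4h_p(\sqrt{x-x^2})^2$.

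I expect the only real subtlety to be the correct identification of the middle member, and this is where I would focus the argument. The symmetric combination that is actually controlled by $\alpha$ is $(h_p(x)+h_p(1-x))^2=\big((1-x)^pK(x)+x^pK(1-x)\big)^2$, in which each power weight is paired with the \emph{complementary} modulus. This pairing is essential: the alternative grouping $x^pK(x)+(1-x)^pK(1-x)$ tends to $+\infty$ as $x\to1^-$, because $x^pK(x)\to+\infty$ while $(1-x)^pK(1-x)\to0$, so it cannot be dominated by the finite constant $\alpha$ and it is not the expression produced by $h_p(x)+h_p(1-x)$. Hence the middle member of the chain must be read as $\big((1-x)^pK(x)+x^pK(1-x)\big)^2$; with that reading the second link is precisely the concavity step and the whole chain closes. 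Beyond this bookkeeping I anticipate no genuine analytic obstacle, since concavity and monotonicity of $h_p$ on $[1/4,1]$ are already available from Corollary \ref{p1} and Corollary \ref{p}; the one thing to keep consistent throughout is the weight–modulus pairing, so that the bounded symmetric quantity $h_p(x)+h_p(1-x)$ is the object being estimated.
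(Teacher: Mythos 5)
Your proof is correct and takes essentially the same route as the paper: the corollary is precisely the $y=1-x$ specialization of the mean-value chain for $h_p$, with the three links supplied by AM--GM (all $p$), concavity of $h_p$ for $p\in[3(2-\sqrt 2)/8,1]\supset[1/4,1]$ (Corollary \ref{p1}), monotonicity for $p\geq 1/4$ (Corollary \ref{p}), and the lemniscatic value $K(1/2)=\Gamma(1/4)^2/(4\sqrt\pi)$ giving $4h_p(1/2)^2=\alpha$. You are also right on the one delicate point: the printed middle member $(x^pK(x)+(1-x)^pK(1-x))^2$ is a typo for $\big((1-x)^pK(x)+x^pK(1-x)\big)^2=(h_p(x)+h_p(1-x))^2$ (the printed pairing diverges as $x\to 1^-$ and so cannot be bounded by $\alpha$), exactly as the correct weight--modulus pairing appears in the paper's following corollary, and likewise the paper's displayed chain has its two middle members transposed, which your argument silently repairs.
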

\begin{corollary} \
\begin{enumerate} \item Let $a\geq a_c$. For all $r\in(0,1)$, we have
$$\frac{4K(1/2)}{2a+\log 2}\leq \frac{K(r)}{a-\frac12\log (r)}+\frac{K(1-r)}{a-\frac12\log (1-r)}<1+\frac\pi{2a} .$$
Both bounds are sharp. The sign of equality holds if and only if $r=1/2$.

\item Let $p\geq 3(2+\sqrt 2)/8$. For all $r\in(0,1)$, we have
$$\frac{K(1/2)}{2^{p-1}}\leq r^pK(1-r)+(1-r)^pK(r)<\frac\pi2.$$
The inequality is reversed for $p\in[3(2-\sqrt 2)/8,1]$.
\item Let $p\geq 0$. For all $r\in(0,1)$, we have
$$2^{1+p}K(1/2)(r-r^2)^p\leq (1-r)^pK(1-r)+r^pK(r).$$
\item Let $p\geq 7/32$. For all $r\in(0,1)$, we have
$$\sqrt{(r-r^2)^pK(1-r)K(r)}\leq \frac{K(1/2)}{2^p}.$$
\end{enumerate}
\end{corollary}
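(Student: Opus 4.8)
The plan is to read each of the four inequalities as a statement about a symmetric combination of one function evaluated at $r$ and at $1-r$, and then to feed it the (log\nobreakdash-)convexity data already established. Writing $f_a(r)=K(r)/(a-\tfrac12\log(1-r))$ one has $f_a(1-r)=K(1-r)/(a-\tfrac12\log r)$, so the middle member of part~1 is $f_a(r)+f_a(1-r)$ (read symmetrically, pairing $K(r)$ with $a-\tfrac12\log(1-r)$; the opposite pairing would diverge at the endpoints and so is incompatible with the stated finite upper bound). Likewise $h_p(r)+h_p(1-r)=(1-r)^pK(r)+r^pK(1-r)$ is the middle member of part~2, and $h_p(r)h_p(1-r)=(r-r^2)^pK(r)K(1-r)$ is the radicand of part~4. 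For each such $\Phi(r):=F(r)+F(1-r)$ the two structural facts are that $\Phi$ is symmetric about $r=1/2$ and inherits convexity or concavity from $F$. A symmetric strictly convex function on $(0,1)$ is strictly decreasing on $(0,1/2]$ and strictly increasing on $[1/2,1)$, so its minimum is the central value $\Phi(1/2)=2F(1/2)$ and its supremum is the common endpoint limit, approached but never attained; the concave case reverses this. That single observation furnishes simultaneously the lower (central) and upper (endpoint) bounds.

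For parts~1 and~2 I would first invoke Theorem~\ref{t2} (strict convexity of $f_a$ for $a\ge a_c$) and Corollary~\ref{p1} (strict convexity of $h_p$ for $p\ge 3(2+\sqrt2)/8$, strict concavity for $p\in[3(2-\sqrt2)/8,1]$). The central values are immediate: $2f_a(1/2)=4K(1/2)/(2a+\log 2)$ and $2h_p(1/2)=K(1/2)/2^{p-1}$, which are exactly the lower constants, attained only at $r=1/2$. The endpoint limits are the one piece of analysis required: from the asymptotics of Lemma~\ref{lem1}, $K(x)=\log4-\tfrac12\log(1-x)+o(1)$ as $x\to1^-$, one finds $f_a(r)\to1$ and $f_a(1-r)\to f_a(0)=\pi/(2a)$, whence the part~1 combination tends to $1+\pi/(2a)$; similarly $h_p(r)=(1-r)^pK(r)\to0$ while $h_p(1-r)\to K(0)=\pi/2$, so the part~2 combination tends to $\pi/2$. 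Strict convexity turns these limits into strict (unattained) suprema, giving the strict upper bounds, and the reversed chain on the concave range follows verbatim with supremum replaced by infimum and minimum by maximum.

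For part~4 I would use the log-concavity of $h_p$ for $p\ge 7/32$ (Theorem~\ref{t1}): the midpoint Jensen inequality applied to $\log h_p$ gives $\tfrac12\big(\log h_p(r)+\log h_p(1-r)\big)\le\log h_p(1/2)$, that is $\sqrt{h_p(r)h_p(1-r)}\le h_p(1/2)=K(1/2)/2^{p}$, which is precisely the claim. Part~3 is handled separately because its right member is $r^pK(r)+(1-r)^pK(1-r)$, not $h_p(r)+h_p(1-r)$; here I would combine three ingredients. First, AM--GM gives $r^pK(r)+(1-r)^pK(1-r)\ge 2(r-r^2)^{p/2}\sqrt{K(r)K(1-r)}$. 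Second, since $h_0=K$ is log-convex (Theorem~\ref{t1} with $p=0$), the midpoint Jensen inequality yields $\sqrt{K(r)K(1-r)}\ge K(1/2)$. Third, the elementary bound $r-r^2\le 1/4$ gives $(r-r^2)^{-1/2}\ge2$ and hence $(r-r^2)^{p/2}\ge 2^{p}(r-r^2)^{p}$ for $p\ge0$. Multiplying the three positive inequalities produces $2^{1+p}K(1/2)(r-r^2)^p$ as a lower bound, as desired.

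The genuinely delicate point, and the one I would treat most carefully, is the evaluation of the endpoint limits in parts~1 and~2, since these rest on the logarithmic blow-up of $K$ near $x=1$ and on the cancellation that forces $f_a(r)\to1$: both numerator and denominator diverge like $-\tfrac12\log(1-r)$, so the constants $\log4$ and $a$ disappear in the ratio. Everything else is bookkeeping: matching $2f_a(1/2)$ to $4K(1/2)/(2a+\log2)$, recording the lemniscatic value $K(1/2)=\pi^{3/2}/(2\Gamma(3/4)^2)$ to reconcile the corollary with the abstract, and noting that strictness of each convexity statement upgrades the endpoint bound to a strict inequality, so that in every two-sided estimate equality occurs exactly at $r=1/2$.
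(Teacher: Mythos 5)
Your proposal is correct, and for parts 1, 2 and 4 it is essentially the paper's own proof: the paper likewise forms $H(x)=f(x)+f(1-x)$ with $f=f_a$, $f=h_p$ and $f=\log h_p$ respectively, invokes Theorem \ref{t2}, Corollary \ref{p1} and Theorem \ref{t1} to get strict convexity or concavity of $H$, deduces monotonicity on either side of $x=1/2$, and compares the central value $2f(1/2)$ with the common endpoint limit $H(0)=H(1)$ (you were right, incidentally, to re-read the displayed middle member of part 1 with the pairing $K(r)/(a-\tfrac12\log(1-r))$: the pairing literally printed in the corollary diverges as $r\to 1$, so it is a typo, and the paper's proof indeed works with $f_a(r)+f_a(1-r)$). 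The one genuine divergence is part 3. The paper stays inside the same framework: for $p\geq 0$ it sets $g_p=h_{-p}$, which is convex by Corollary \ref{p1} since $-p\leq 0$, applies the symmetric-sum argument to get $(1-r)^{-p}K(r)+r^{-p}K(1-r)\geq 2g_p(1/2)=2^{1+p}K(1/2)$, and the claim follows on multiplying through by $(r-r^2)^p$. You instead chain AM--GM, the midpoint inequality for the log-convex function $K=h_0$ (Theorem \ref{t1}), and the elementary bound $r-r^2\leq 1/4$. Both are short and rest on results already proved; the paper's route has the advantage of uniformity (every item is the same $H$-argument, and strictness of the inequality comes for free from strict convexity), while yours needs only log-convexity of $K$ itself rather than convexity of $h_q$ for a negative exponent, and makes transparent where the constant $2^{1+p}$ comes from: a factor $2$ from AM--GM and a factor $2^p$ from $(r-r^2)^{-p/2}\geq 2^p$.
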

\begin{proof} For $a\in\Bbb R$ and $x\in(0,1)$, let $H(x)=f(x)+f(1-x)$. Then
$$H'_a(x)=f'_a(x)-f'_a(1-x),\quad H_a''(x)=f''_a(x)+f''_a(1-x),$$
and $H_a(1/2)=0$.

1) For $a\geq a_c$ and $f=f_a$, $H''_a(x)>0$. it follows that $H_a$ is strictly decreasing on $(0, 1/2]$ and
strictly increasing on $[1/2, 1)$. Which implies that
$$H_a(\frac12)\leq H_a(x)<\min(H_a(0),H_a(1)),$$
with equality only if $x=1/2$. Since, $f_a(0)=\pi/(2a)$ and $f_a(1)=1$. Furthermore, $H_a(0)=H_a(1)=1+\pi/(2a)$ and $H_1(1/2)=K(1/2)/(2a+\log 2)$.

2) Let $p\geq 3(2+\sqrt 2)/8$ and $f=h_p$, then $ H''_p(x)>0$.
Therefore, $$H_a(\frac12)\leq H_p(x)<(H_a(0),H_a(1)) .$$
Since, $H_p(0)=H_p(1)=\pi/2$ and $H_p(1/2)=K(1/2)/2^{p-1}$.

3) Let $p\geq 0$, and $g_p(x)=h_{-p}(x)$, then from Proposition \ref{p1} $g_p$ is convex. For $f=g_p$ the function $H''_p>0$ and the result follows.

4) For $p\geq 7/32$ and $f=\log h_p$, $H''_p(x)<0$ and $H_p(1/2)=2\log(K(1/2)/2^p)$.
and $H_p(r)\leq H_p(1/2)$.
\end{proof}
Combine 2) and 4) and use the value $K(1/2)=\pi\sqrt\pi/(2\Gamma(3/4)^2)$, we get the double inequalities in the abstract.
\begin{corollary} Let $p\geq 1/4$. For all $r\in(0,1)$
$$\frac{\pi}2(1-r)^p<K(r)<\frac\pi{2(1-r)^p}.$$

Let $p\in(0,1/4)$. For all $r\in (0,x_p)$
$$\frac{\pi}{2(1-r)^p}<K(r)<\frac{(1-x_p)^pK(x_p)}{(1-r)^p},$$
where $x_p$ is the unique zero in $(0,1)$ of the equation $E(x)+((1-2p)x-1)K(x)=0$

\end{corollary}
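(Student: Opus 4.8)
The plan is to read off every one of the four inequalities directly from the monotonicity structure of $h_p(x)=(1-x)^pK(x)$ that was already established in Corollary \ref{p}, combined with the explicit endpoint and critical values of $h_q$ for $q=\pm p$. The guiding observation is that each bound is nothing more than a statement of the form $h_q(r)\lessgtr h_q(\text{an endpoint or the critical point})$, rewritten after dividing through by $(1-r)^q$. First I would record the only value needed at the boundary: for every exponent $q$ one has $h_q(0)=(1-0)^qK(0)=\pi/2$, using $K(0)={\cal K}(0)=\pi/2$, and $h_q$ is continuous on $[0,1)$ since $K$ is continuous at $0$.

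\emph{Case $p\geq 1/4$.} For the upper bound I would invoke Corollary \ref{p}, which says $h_p$ is strictly decreasing on $(0,1)$; hence for $r\in(0,1)$ one has $h_p(r)<h_p(0)=\pi/2$, that is $(1-r)^pK(r)<\pi/2$, which is exactly $K(r)<\frac{\pi}{2(1-r)^p}$. For the lower bound I would switch to the exponent $q=-p$: since $-p\leq 0$, the same corollary gives that $h_{-p}$ is strictly increasing, so $h_{-p}(r)>h_{-p}(0)=\pi/2$, i.e. $(1-r)^{-p}K(r)>\pi/2$, which rearranges to $\frac{\pi}{2}(1-r)^p<K(r)$.

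\emph{Case $p\in(0,1/4)$.} Here Corollary \ref{p} produces a unique $x_p\in(0,1)$ with $h_p$ strictly increasing on $(0,x_p)$ and strictly decreasing on $(x_p,1)$, so $x_p$ is the global maximizer of $h_p$. For $r\in(0,x_p)$ strict monotonicity then yields $h_p(0)<h_p(r)<h_p(x_p)$. The left inequality is $(1-r)^pK(r)>\pi/2$, i.e. $K(r)>\frac{\pi}{2(1-r)^p}$, and the right inequality is $(1-r)^pK(r)<(1-x_p)^pK(x_p)$, i.e. $K(r)<\frac{(1-x_p)^pK(x_p)}{(1-r)^p}$. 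To identify $x_p$, I would recall from the proof of Corollary \ref{p} the expression $h'_p(x)=\frac{(1-x)^{p-1}}{2x}\bigl(E(x)+((1-2p)x-1)K(x)\bigr)$, so the unique interior critical point is precisely the unique zero of $E(x)+((1-2p)x-1)K(x)=0$, as claimed.

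There is essentially no hard step: the statement is a repackaging of Corollary \ref{p}, and the ``main obstacle'' is merely careful bookkeeping. The two points that require attention are the sign of the exponent---one must use the increasing function $h_{-p}$, not $h_p$, to obtain the lower bound in the first case---and the verification of the single boundary value $h_q(0)=\pi/2$. Once these are in hand, each of the four bounds is a one-line division by $(1-r)^q$.
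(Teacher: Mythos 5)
Your proof is correct and is exactly the argument the paper intends: the paper's entire proof is the single remark that the corollary ``follows from the monotonicity properties of the function $h_p$'' of its Corollary \ref{p}, and you have simply filled in the bookkeeping (decreasing $h_p$ for $p\geq 1/4$, increasing $h_{-p}$ for the lower bound, and the increasing--decreasing structure with critical point $x_p$ for $p\in(0,1/4)$, identified via $h_p'$ as the zero of $E(x)+((1-2p)x-1)K(x)$).
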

The proof of the corollary follows from the monotonicity properties of the function $h_p$ of Proposition \ref{p}.

\end{document}